\DeclareMathOperator{\PP}{\mathbb{P}}
\DeclareMathOperator{\Bim}{\mathrm{Bim}}
\DeclareMathOperator{\Bir}{\mathrm{Bir}}
\DeclareMathOperator{\Aut}{\mathrm{Aut}}
\DeclareMathOperator{\Psaut}{\mathrm{Psaut}}
\numberwithin{equation}{section}
\begin{document} 

\title{Finite abelian subgroups in the groups of birational and bimeromorphic selfmaps}
\date{}
\author{Aleksei Golota} 
\thanks{This work was supported by the Russian Science Foundation under grant no. 23-11-00033,
https://rscf.ru/en/project/23-11-00033/}

\newtheorem{theorem}{Theorem}[section] 
\newtheorem*{ttheorem}{Theorem}
\newtheorem{lemma}[theorem]{Lemma}
\newtheorem{proposition}[theorem]{Proposition}
\newtheorem*{conjecture}{Conjecture}
\newtheorem{corollary}[theorem]{Corollary}

{\theoremstyle{remark}
\newtheorem{remark}[theorem]{Remark}
\newtheorem{example}[theorem]{Example}
\newtheorem{notation}[theorem]{Notation}
\newtheorem{question}[theorem]{Question}
}

\theoremstyle{definition}
\newtheorem{construction}[theorem]{Construction}
\newtheorem{definition}[theorem]{Definition}

\begin{abstract} Let $X$ be a complex projective variety. Suppose that the group of birational automorphisms of $X$ contains finite subgroups isomorphic to $(\mathbb{Z}/N\mathbb{Z})^r$ for $r$ fixed and $N$ arbitrarily large. We show that $r$ does not exceed $2\dim(X)$. Moreover, the equality holds if and only if $X$ is birational to an abelian variety. We also show that an analogous result holds for groups of bimeromorphic automorphisms of compact K\"ahler spaces under some additional assumptions.
\end{abstract}

\maketitle

\section{Introduction} In the present paper we study finite abelian subgroups in the groups of birational automorphisms of projective algebraic varieties (over a field of zero characteristic), or in the groups of bimeromorphic automorphisms of compact K\"ahler spaces. The starting point for us is the following recent theorem by I. Mundet i Riera \cite[Theorem ~1.9]{Mun21}.

\begin{theorem} \label{mundet1} Let $X$ be a connected compact K\"ahler manifold. Suppose that there exists $r \in \mathbb{N}$ such that for arbitrarily large positive integers $N$ the group $\Aut(X)$ contains a subgroup isomorphic to $(\mathbb{Z}/N\mathbb{Z})^r$. Then $\Aut(X)$ contains a subgroup isomorphic to a compact real torus of dimension $r$. In addition, $r \leqslant 2\dim(X)$, and if $r = 2\dim(X)$ then $X$ is biholomorphic to a compact complex torus.
\end{theorem}

The maximal number $r$ satisfying the assumptions of Theorem \ref{mundet1} is called in \cite{Mun21} the (holomorphic) {\em discrete degree of symmetry} of $X$. More generally, in \cite{Mun21} I. Mundet i Riera defines and studies this invariant for continuous group actions on topological manifolds. In some cases, the discrete degree of symmetry can be compared to the maximal dimension of a torus acting effectively on a manifold \cite[Theorem 1.7]{Mun21}. In connection with Theorem \ref{mundet1} I. Mundet i Riera also asks whether the same bound on $r$ holds also for birational automorphism groups. In fact, this invariant has implicitly appeared in the study of $p$-subgroups of birational automorphism groups. For instance, in \cite[Theorem 2.9]{Xu18} J. Xu proved the following result for non-uniruled algebraic varieties.

\begin{theorem}\label{jxu1} Let $X$ be a non-uniruled algebraic variety over an algebraically closed field of characteristic zero. There exists a constant $b(X)$ such that the group $\Bir(X)$ contains an element of order greater than $b(X)$ if and only if $X$ is birational to a variety $X'$ which admits an effective action of an abelian variety.
\end{theorem}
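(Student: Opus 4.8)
The plan is to prove the equivalence in the shape: $\Bir(X)$ contains torsion elements of unbounded order if and only if $X$ is birational to a variety $X'$ carrying an effective action of an abelian variety of positive dimension. The reverse implication is immediate: if such an $X'$ exists and $A$ is the abelian variety, then $\Bir(X)\cong\Bir(X')\supseteq\Aut(X')\supseteq A$, and since $A$ has a subgroup isomorphic to $(\mathbb{Z}/m\mathbb{Z})^{2\dim A}$ for every $m\ge 1$, the group $\Bir(X)$ has torsion elements of arbitrarily large order, so no finite bound $b(X)$ exists. The remaining content is the forward implication.

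\emph{Reduction to automorphisms of minimal models.} Choose cyclic subgroups $G_i\subseteq\Bir(X)$ with $|G_i|=N_i\to\infty$. For each $i$, pass to a $G_i$-equivariant resolution, so that $G_i$ acts biregularly on a smooth projective model of $X$; this model is again non-uniruled, hence has pseudoeffective canonical class, and a $G_i$-equivariant run of the minimal model program -- which never produces a Mori fibre space, the model being non-uniruled -- yields a $\mathbb{Q}$-factorial terminal variety $Y_i$, birational to $X$, with $K_{Y_i}$ nef and $G_i\hookrightarrow\Aut(Y_i)$. Any two of the $Y_i$ are isomorphic in codimension one and joined by flops, so their Betti and Hodge numbers coincide; write $n=\sum_j b_j(Y_i)$ for the total Betti number.

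\emph{The dichotomy on the identity component.} For a non-uniruled projective variety $Y$ the connected group $\Aut^0(Y)$ is an abelian variety: a nontrivial linear part would contain $\mathbb{G}_a$ or $\mathbb{G}_m$, and the closures of the general orbits of such a one-parameter subgroup would be rational curves sweeping out $Y$, contradicting non-uniruledness. Hence if $\Aut^0(Y_i)\neq 0$ for even a single $i$, then $\Aut^0(Y_i)$ is a positive-dimensional abelian variety acting effectively on $Y_i$, and we may take $X'=Y_i$.

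\emph{The remaining case and the main obstacle.} It remains to rule out the possibility that $\Aut^0(Y_i)=0$ for all large $i$. In that case $\Aut(Y_i)$ is discrete and acts on $H^*(Y_i,\mathbb{Z})/\mathrm{tors}\cong\mathbb{Z}^n$; by the Lieberman--Fujiki finiteness theorem the kernel $K_i$ of this action -- the group of cohomologically trivial automorphisms -- is finite. Minkowski's bound $M(n)$ on finite subgroups of $\mathrm{GL}_n(\mathbb{Z})$ then forces $[G_i:G_i\cap K_i]\le M(n)$, so $|K_i|\ge N_i/M(n)\to\infty$. The heart of the argument, and the step I expect to be the main obstacle, is to contradict this by bounding the order of a cohomologically trivial automorphism of $Y_i$ \emph{uniformly in $i$}; this should follow from (i) flop-invariance, or at least uniform boundedness, of the torsion in the low-degree integral cohomology of the $Y_i$, together with (ii) a bound on cohomologically trivial automorphism groups in terms of that torsion, via the holomorphic and topological Lefschetz fixed-point formulas. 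Granting this, $|K_i|$ is bounded, a contradiction, and the forward implication follows. One technical caveat: the use of the minimal model program above is unconditional only in low dimensions, and in general rests on the expected existence and termination statements for non-uniruled varieties.
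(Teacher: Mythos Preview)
Your proposal has a genuine gap at exactly the point you flag as the ``main obstacle''. By running a separate $G_i$-equivariant MMP for each $i$ you produce a sequence of possibly different minimal models $Y_i$, and you then need to bound the finite groups $K_i$ of cohomologically trivial automorphisms \emph{uniformly across the $Y_i$}. There is no mechanism for this in the argument as written: flop-invariance controls the Betti numbers and even the torsion in integral cohomology, but the Lefschetz-type bounds you allude to do not give a uniform bound on $|K_i|$ in terms of those invariants alone. In addition, the equivariant MMP you invoke (existence and termination of flips) is conditional in dimension $\geqslant 5$, which you note yourself.

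The paper's treatment (via Theorem~\ref{NonUni}, which refines the statement in question) avoids both problems with a single change of viewpoint: instead of one model per group, it passes once and for all to a \emph{quasi-minimal} model $X'$ of $X$, which exists unconditionally in every dimension by \cite{PS14}. On a quasi-minimal variety every birational self-map is a pseudoautomorphism (Proposition~\ref{psaut}), so $\Bir(X)=\Psaut(X')$ and \emph{all} of the $G_i$ act on the same space $X'$. The rest then runs much as in your sketch, but on this fixed model: the action of $\Psaut(X')$ on $H^{1,1}(X',\mathbb{Q})$ has image with bounded finite subgroups by Minkowski, so up to bounded index each $G_i$ lies in the kernel $\Psaut(X')_\tau$; the key extra ingredient is Proposition~\ref{psautreg}, which says that a pseudoautomorphism fixing a K\"ahler class is already a biholomorphism, whence $\Psaut(X')_\tau\subseteq\Aut(X')$. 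Now everything takes place inside the single Lie group $\Aut(X')$, and the existence of a positive-dimensional real torus---hence, since $X'$ is non-uniruled, of an abelian variety in $\Aut^0(X')$---follows from Mundet i Riera's Theorem~\ref{mundet}. Your dichotomy on $\Aut^0$ becomes automatic and the uniformity problem disappears entirely.
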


A remarkable result of Yu. Prokhorov and C. Shramov \cite[Theorem 1.10]{PS16}, together with C. Birkar's solution of the BAB conjecture \cite[Theorem 1.1]{Bir21}, provides a stronger bound for rationally connected varieties.

\begin{theorem}\label{prokshr} Let $X$ be a rationally connected algebraic variety of dimension $n$ over an algebraically closed field of characteristic zero. There exists a constant $L = L(n)$ such that, for any prime number $p > L(n)$, each finite $p$-subgroup $G \subset \Bir(X)$ is isomorphic to $(\mathbb{Z}/p\mathbb{Z})^r$ for some $r \leqslant n$. 
\end{theorem}

By a result of J. Xu \cite{Xu20} the constant $L$ in the above theorem can be taken to be $n+1$. Moreover, J.\,Xu proved a rationality criterion for rationally connected varieties admitting an action of ~$(\mathbb{Z}/p\mathbb{Z})^r$ in terms of $r$ and $p$ (see \cite[Theorem 4.5]{Xu18}).

\begin{theorem}\label{jxu2} Let $X$ be a rationally connected algebraic variety of dimension $n$ over an algebraically closed field of characteristic zero. Then there exists a constant $R(n)$ such that if $\Bir(X)$ contains a subgroup isomorphic to $(\mathbb{Z}/p\mathbb{Z})^n$ for some $p > R(n)$ then $X$ is rational.
\end{theorem}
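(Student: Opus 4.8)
\emph{Proof strategy.} I would prove the theorem --- which I read as the statement for a subgroup $(\mathbb{Z}/p\mathbb{Z})^{n}$ with $n=\dim X$, this being the only non-trivial case --- by induction on $n$, in the sharper form: for each $n$ there is a constant $R(n)$ such that if $X$ is rationally connected of dimension $n$ and $\Bir(X)$ contains $G\cong(\mathbb{Z}/p\mathbb{Z})^{s}$ with $p>R(n)$, then $s\le n$, and $s=n$ implies $X$ rational. The cases $n\le 1$ are clear; I assume $n\ge 2$, the statement in smaller dimensions, and $R(n)\ge R(n-1)$, to be enlarged finitely often below. First I would regularize: replacing $X$ by a birational model, I may assume $X$ is smooth projective and $G$ acts biregularly (standard in characteristic zero, e.g.\ by $G$-equivariantly resolving a projective model carrying the rational $G$-action). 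Since $X$ is uniruled, $K_X$ is not pseudo-effective, so a $G$-equivariant minimal model program ends in a $G$-Mori fibre space $\pi\colon Y\to Z$ with $Y$ $\mathbb{Q}$-factorial, terminal and rationally connected, $-K_Y$ $\pi$-ample, $\rho^{G}(Y/Z)=1$, and $0\le\dim Z<n$; the base $Z=\pi(Y)$ is again rationally connected. Write $G_Z\trianglelefteq G$ for the kernel of the action on $Z$ and $\bar G=G/G_Z\hookrightarrow\Bir(Z)$, so that $s=\rk G_Z+\rk\bar G$.

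Next I would bound the two ranks. Applying the inductive hypothesis to $Z$ gives $\rk\bar G\le\dim Z$. The group $G_Z$ acts faithfully, over $K=k(Z)$, on the generic fibre $F=Y_\eta$, a terminal Fano variety of dimension $e:=n-\dim Z$; by boundedness of Fano varieties of a fixed dimension with mild singularities, the geometric fibre $F_{\bar K}$ lies in a bounded family, so $\dim\Aut^{0}(F_{\bar K})$, the orders of its component groups and its torsion primes are bounded in terms of $n$. Enlarging $R(n)$ past these bounds, $G_Z\subseteq\Aut^{0}(F_{\bar K})$, and being abelian with all elements semisimple (characteristic zero) it lies in a single maximal torus $T$ of $\Aut^{0}(F_{\bar K})$; as $T$ acts faithfully on $F_{\bar K}$, $\dim T\le e$, hence $\rk G_Z\le e$. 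Summing, $s\le\dim Z+e=n$, which is the first assertion.

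The equality case $s=n$ is where I expect the real difficulty. Then $\rk\bar G=\dim Z$, so $Z$ is rational by induction, while $\rk G_Z=e=\dim T$, so $F_{\bar K}$ is a toric variety whose torus has dimension $\dim F_{\bar K}$, hence $\bar K$-rational; I must descend this to $K$. For $p$ large $G_Z$ contains a regular semisimple element of $\Aut^{0}(F_{\bar K})$, so $T$ is the \emph{unique} maximal torus containing $G_Z$; being canonically attached to the Galois-stable subgroup $G_Z$, it is defined over $K$. Since $k$ is algebraically closed, $K$ contains $\mu_p$, so $G_Z=T[p](\bar K)$ being $K$-rational forces the Galois action on the character lattice $\mathbb{Z}^{e}$ of $T$ to be trivial modulo $p$; as this action factors through a finite subgroup of $GL_{e}(\mathbb{Z})$, Minkowski's lemma ($p\ge 3$) makes it trivial, so $T$ is split over $K$. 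Then $F$ is a split toric variety over $K$, in particular $K$-rational, and since $Z$ is rational, $k(Y)=K(F)$ is purely transcendental over $k$, so $Y$ --- hence $X$ --- is rational.

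The main obstacle is exactly this equality analysis: proving that a mildly singular Fano of dimension $e$ admitting an elementary abelian $p$-group of rank $e$ with $p\gg0$ must be toric --- combining boundedness of Fano varieties with the structure theory of linear algebraic groups to force the $p$-group into a maximal torus --- and then descending its rationality to the function field $k(Z)$. A further point needing care is uniformity: every threshold that appears, from boundedness, from Minkowski's lemma, and the inductive constant $R(n-1)$, must be absorbed into a single $R(n)$ depending on $n$ alone.
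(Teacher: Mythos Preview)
The paper does not prove this theorem. Theorem~\ref{jxu2} is quoted in the introduction as \cite[Theorem~4.5]{Xu18} and is never revisited; the paper's own contribution in the rationally connected direction is the weaker Theorem~\ref{MainRC}, which only establishes the bound $r\le n$ and says nothing about rationality in the equality case. So there is no proof in the paper to compare your proposal against.

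That said, two remarks are in order. First, for the part the paper \emph{does} prove---the inequality $r\le n$---its argument is quite different from yours: rather than running a $G$-equivariant MMP and inducting on a Mori fibre space, the paper invokes the Prokhorov--Shramov fixed-point theorem (Proposition~\ref{fixpt}, resting on \cite{PS16} and \cite{Bir21}) to find, after passing to a bounded-index subgroup, a fixed point on a smooth regularization, and then reads off $r\le n$ from the faithful tangent-space representation (Proposition~\ref{faith}). Your inductive MMP approach recovers this bound as well, but with more machinery; on the other hand it is the natural route to the rationality statement, which the fixed-point argument cannot reach.

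Second, your outline of the equality case is essentially the strategy of \cite{Xu18}, and the obstacles you flag are the right ones. A few points would need tightening in a full write-up: that a finite abelian $p$-subgroup of $\Aut^{0}(F_{\bar K})$ lands in a maximal torus uses that $\Aut^{0}$ is a \emph{linear} algebraic group (true for Fano $F$) and that commuting semisimple elements in a connected linear group lie in a common torus; the ``regular element for $p\gg0$'' step needs the boundedness to control the Weyl-group/centralizer data uniformly; and the final descent (split torus $\Rightarrow$ $F$ is $K$-rational) implicitly uses that the dense $T$-orbit, a priori only a $T$-torsor over $K$, is trivial---which is fine here by Hilbert~90 since $T$ is split. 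None of these is a genuine gap, but each deserves a sentence.
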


Informally speaking, these results suggest that existence of finite abelian subgroups in $\Bir(X)$ of unbounded orders should imply existence of algebraic groups (of positive dimension depending on $r$) acting on $X$ by birational automorphisms, at least if the ranks $r$ of the finite abelian groups are close to maximal. For smaller values of $r$ the relation between finite abelian and algebraic subgroups of $\Bir(X)$ is more delicate. For instance, there exists a sequence of finite cyclic subgroups of $\mathrm{Cr}_2(\mathbb{C}) = \Bir(\mathbb{P}^2_{\mathbb{C}})$ which generate a subgroup isomorphic to $\mathbb{Q}/\mathbb{Z}$ but are not contained in any torus in the Cremona group \cite{Wri79}. Existence of finite abelian subgroups of unbounded orders in the group $\Bir(X)$, where $X$ is a non-rational rationally connected threefold, is a difficult open problem (see \cite[Question 4.8]{PS18}). Another related open problem (cf. \cite[Conjecture 1.7]{Xu18}) is a conjectural description of projective varieties with non-Jordan groups of birational automorphisms. The first examples of such varieties were constructed in \cite{Zar14}; a complete description exists in dimension 3 by \cite[Theorem 1.8]{PS18} and \cite[Theorem 1.6]{Xu18}.

We should also mention a ``toroidalization principle'' recently studied by J. Moraga in his works on Kawamata log terminal singularities \cite{Mor20, Mor21a, Mor21b}. In particular, he showed that existence of ``large'' finite abelian groups of rank $n$ acting on a projective Fano type variety of dimension $n$ implies that $X$ is birational to a log Calabi--Yau toric pair (\cite[Theorem 2]{Mor20}). In \cite[Theorem 1]{Mor21b} a general result on toroidalization for finite group actions on klt singularities is proved. The case of cyclic group actions on Fano type surfaces is studied in \cite{Mor21a}.

The aim of this paper is to initiate a systematic study of an invariant similar to the discrete degree of symmetry for groups of birational (and bimeromorphic) automorphisms. Our main result is a generalization of Theorem ~\ref{mundet1} to groups of birational automorphisms.

\begin{theorem}\label{Main} Let $X$ be a projective algebraic variety over an algebraically closed field of zero characteristic. Suppose that there exists an unbounded sequence $\{N_i\}_{i \in \mathbb{N}}$ of positive integers such that the group $\Bir(X)$ contains subgroups isomorphic to $(\mathbb{Z}/N_i\mathbb{Z})^r$ for some fixed $r$. Then $r \leqslant 2\dim(X)$, and in case of equality $X$ is birational to an abelian variety. 
\end{theorem}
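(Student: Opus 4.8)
The plan is to induct on $\dim X$. Two preliminary reductions streamline the argument. First, replacing $G_i\cong(\mathbb{Z}/N_i\mathbb{Z})^r$ by its subgroup of $M$-th powers, for a fixed integer $M$, replaces $N_i$ by $N_i/\gcd(N_i,M)$ — still unbounded — and preserves the shape $(\mathbb{Z}/N_i'\mathbb{Z})^r$; this lets me move $G_i$ into any prescribed finite-index subgroup of $\Bir(X)$. Second, and crucially: if $1\to K_i\to G_i\to \bar{G}_i\to 1$ is exact with $G_i\cong(\mathbb{Z}/N_i\mathbb{Z})^r$, then writing $G_i=\mathbb{Z}^r/N_i\mathbb{Z}^r$ and putting the preimage of $K_i$ into Smith normal form one gets $\bar{G}_i\cong\bigoplus_j\mathbb{Z}/d_{ij}\mathbb{Z}$ with $d_{i1}\mid\cdots\mid d_{ir}\mid N_i$ and $\#\{j:d_{ij}\neq N_i\}=\rk K_i$, so $\bar{G}_i\supseteq(\mathbb{Z}/N_i\mathbb{Z})^{\,r-\rk K_i}$. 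Thus a bound on $\rk K_i$ converts a dimension bound for a quotient target into a bound on $r$, and the shape $(\mathbb{Z}/N\mathbb{Z})^{\bullet}$ is inherited. The cases $\dim X\le 1$ are immediate.

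Suppose $X$ is uniruled and let $X\dashrightarrow Z$ be the MRC fibration: $Z$ is non-uniruled, $\dim Z=\dim X-d$ with $d\ge 1$, and the generic fibre $X_\eta$ over $k(Z)$ is rationally connected of dimension $d$. By uniqueness of the MRC fibration $\Bir(X)$ acts on $Z$; set $K_i:=G_i\cap\ker(\Bir(X)\to\Bir(Z))$. Over $\overline{k(Z)}$ the group $K_i$ acts faithfully on a rationally connected $d$-fold, and the boundedness of finite abelian subgroups of large exponent in Cremona-type groups (the refinement of Theorem \ref{jxu2} bounding their rank by the dimension, in the circle of ideas of Birkar's boundedness and the work of Prokhorov and Shramov) gives $\rk K_i\le d$ — this also covers the case $d=\dim X$ where $X$ itself is rationally connected. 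By the Smith normal form remark the image of $G_i$ in $\Bir(Z)$ contains $(\mathbb{Z}/N_i\mathbb{Z})^{\,r-\rk K_i}$; passing to a subsequence on which $\rk K_i$ is constant and invoking the inductive bound for $\Bir(Z)$ yields $r-\rk K_i\le 2\dim Z$, hence $r\le d+2(\dim X-d)=2\dim X-d<2\dim X$. So equality cannot hold when $X$ is uniruled.

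Suppose $X$ is not uniruled. Since $(\mathbb{Z}/N_i\mathbb{Z})^r$ contains an element of order $N_i$, Theorem \ref{jxu1} lets me replace $X$ by a model $X'$ on which a (positive-dimensional) abelian variety acts effectively. Any two abelian subvarieties of $\Bir(X')$ commute: pass to a common model on which both act biregularly and apply the rigidity lemma to their commutator morphism, which vanishes on both factors while one factor is complete; hence there is a unique maximal abelian subvariety $A\subseteq\Bir(X')$, and it is normal. Conjugation gives a homomorphism $\Bir(X')\to\Aut^{\mathrm{alg}}_{\mathrm{gp}}(A)$, whose finite subgroups have order bounded in terms of $\dim A$ by Minkowski's theorem, so by the first reduction I may assume $G_i$ centralizes $A$; then $G_i$ descends to the quotient $W:=X'/A$, which is again non-uniruled of dimension $\dim X-\dim A<\dim X$. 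The kernel $K_i$ of $G_i\to\Bir(W)$ acts on the generic $A$-orbit — an $A$-torsor over $k(W)$ — commuting with $A$, hence by translations, so $K_i$ is a finite subgroup of $A$ over $\overline{k(W)}$ and $\rk K_i\le 2\dim A$. The Smith normal form remark and the inductive bound for $\Bir(W)$ give $r-\rk K_i\le 2\dim W$, whence $r\le 2\dim A+2\dim W=2\dim X$.

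Finally, assume $r=2\dim X$. By the second paragraph $X$ is non-uniruled, and in the situation above equality forces $\rk K_i=2\dim A$ and $r-\rk K_i=2\dim W$, so by the inductive characterization $W$ is birational to an abelian variety. Then $X'\to W$ is, generically, an isotrivial family of abelian varieties over an abelian variety whose structure group is reduced to translations, because $A$ acts fibrewise transitively — i.e. a torsor under an abelian group scheme over an abelian variety. Here $\mathrm{Alb}(X')$ is an extension of $\mathrm{Alb}(W)$ by an isogenous copy of $A$, so $q(X')=\dim A+\dim W=\dim X'$; and $\kappa(X')=0$ (the total space of such a torsor is dominated in codimension one by an abelian variety). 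Kawamata's characterization of abelian varieties then shows $X'$, hence $X$, is birational to an abelian variety, completing the induction. The two steps I expect to be most delicate are the rationally connected input $\rk K_i\le d$, which belongs to the Jordan-property/boundedness circle and must be arranged over the non-closed field $k(Z)$; and this last identification, where ruling out bielliptic-type behaviour relies on the fibrewise $A$-action (which forces translations, rather than arbitrary automorphisms, in the monodromy) together with Kawamata's theorem.
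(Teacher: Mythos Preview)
Your uniruled reduction via the MRC fibration, together with the Smith normal form bookkeeping, matches the paper's argument essentially verbatim (the paper packages the Smith normal form step as a subadditivity lemma for ``unbounded rank''). The rationally connected input $\rk K_i\le d$ you invoke is exactly the paper's Theorem~\ref{MainRC}, proved from Prokhorov--Shramov fixed points plus faithfulness on the tangent space; your concern about the non-closed field $k(Z)$ is handled there by base-changing to $\overline{k(Z)}$.

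Where you diverge is the non-uniruled case, and here your route is both genuinely different and incomplete. The paper does \emph{not} induct via a quotient by a maximal abelian subvariety. Instead it passes to a quasi-minimal model $X'$, on which every birational selfmap is a pseudoautomorphism; then Minkowski's bound on $\mathrm{GL}(H^{1,1}(X',\mathbb{Q}))$ forces (after bounded index) the $G_i$ to fix a K\"ahler class, hence to act biregularly; finally Mundet i Riera's Lie-group theorem applied to $\Aut^0(X')$ produces a real torus $T^r$, giving $r\le 2\dim X$ directly and, at equality, forcing an equivariant resolution to be a complex torus. This avoids any quotient construction or Kawamata-type endgame.

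Your approach can probably be made to work, but two steps are not justified as written. First, ``pass to a common model on which both act biregularly'' is the crux of your maximality claim and is not automatic: Weil--Sumihiro regularizes a single connected group, but simultaneously regularizing two abelian subvarieties of $\Bir(X')$ requires knowing in advance that they generate an algebraic group. (For non-uniruled $X'$ one can instead argue via Nishi--Matsumura that every such $A$ embeds in $\mathrm{Alb}(X')$, a birational invariant, but this is a different argument from the one you sketch.) Second, in your equality case the assertions $q(X')=\dim X'$ and $\kappa(X')=0$ both need real work: the inequality $q(X')\ge\dim X'$ follows from Nishi--Matsumura and the inductive hypothesis on $W$, but the reverse inequality requires tracking the Albanese of the quotient; and your claim that the total space is ``dominated in codimension one by an abelian variety'' presumes you can extend the generically-free $A$-action to an honest $A$-torsor over an abelian model of $W$, which is not free. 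The paper sidesteps all of this by getting the complex-torus conclusion for free from Mundet i Riera's theorem on the resolved model.
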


Compared to Theorem \ref{jxu1}, we consider also uniruled varieties; moreover, we do not assume that the orders $N_i$ of generators of the finite groups are prime. The main idea of the proof is to consider the action of $\Bir(X)$ on the maximal rationally connected (MRC) fibration of $X$ (see Definition \ref{mrcdef} below); this idea is already present in J. Xu's work (see \cite[Proposition 2.12]{Xu18}). Combining it with some technical results from our paper \cite{Gol23}, we prove an analogous result for groups of bimeromorphic selfmaps of compact K\"ahler spaces. We have to assume the existence of quasi-minimal models (see Definition \ref{qmin} and Proposition \ref{NonUniKah} below) for the (non-uniruled) base of the MRC fibration of $X$. This is the case if the base has dimension at most 3 by \cite[Theorem 1.1]{HP16} and is expected to be true in any dimension.

\begin{theorem} \label{MainKahler} Let $X$ be a compact K\"ahler space. Assume that the base $B$ of the MRC fibration of $X$ admits a quasi-minimal model. Suppose that there exists an unbounded sequence $\{N_i\}_{i \in \mathbb{N}}$ of positive integers such that the group $\Bim(X)$ contains subgroups isomorphic to $(\mathbb{Z}/N_i\mathbb{Z})^r$ for some fixed $r$. Then we have $r \leqslant 2\dim(X)$ and in case of equality $X$ is bimeromorphic to a compact complex torus. 
\end{theorem}

Let us outline the structure of the paper. In Section 2 we gather some technical results. Section 3 is devoted to the proof of our main theorem. First, in subsection 3.1 we use techniques from our previous paper \cite{Gol23} to generalize Theorem \ref{mundet1} to pseudoautomorphisms of compact K\"ahler spaces with rational singularities (see Theorem \ref{MainThm2}). Then, in subsection 3.2 we prove the main theorem for non-uniruled projective varieties (Theorem \ref{NonUni}), following the ideas from \cite[Section 2]{Xu18}. In subsection 3.3 we use the results of Prokhorov and Shramov from \cite{PS16} to derive the bound on $r$ for abelian groups acting on rationally connected varieties. Finally, in subsection 3.4 we derive Theorem \ref{Main} from Theorems \ref{NonUni} and ~\ref{MainRC}, using the maximal rationally connected fibration of $X$. We also prove Theorem \ref{MainKahler} in this section.

\textbf{Acknowledgement.} The author thanks Constantin Shramov for suggesting this problem; he also thanks Ignasi Mundet i Riera and the anonymous referee for valuable remarks. This work was supported by the Russian Science Foundation under grant no. 23-11-00033, https://rscf.ru/en/project/23-11-00033/

\section{Preliminaries}

\subsection{Conventions and terminology} An {\em algebraic variety} (or just a {\em variety}) is an integral separated scheme of finite type over a field $k$. Unless explicitly stated otherwise, the base field $k$ is always assumed to be algebraically closed and of characteristic zero. 

In what follows we consider irreducible and reduced compact complex spaces, see \cite[Chapter 1]{GR} for a general reference on complex analytic spaces. A complex {\em manifold} is a nonsingular complex space. We consider only compact K\"ahler manifolds. For the definition of a singular compact {\em K\"ahler space} see Definition \ref{singkahler}; this definition follows the one in \cite{HP16}.

\subsection{Structure of abelian subgroups} In this subsection we collect a few technical statements about subgroups of finite abelian groups.

\begin{definition} \label{rank} Let $G$ be a finite abelian group. The {\em rank} $r(G)$ is defined as the minimal size of a generating set of $G$. An {\em elementary} abelian group of rank $r$ is an abelian group isomorphic to $(\mathbb{Z}/N\mathbb{Z})^r$.
\end{definition}

\begin{lemma}\label{abelian} Let $G \simeq (\mathbb{Z}/N\mathbb{Z})^r$ be a finite abelian group. Let $H \subset G$ be a subgroup of index $$I_H \leqslant N-1.$$ Then there exists an elementary subgroup $H' \subset H$ such that $H' \simeq (\mathbb{Z}/N'\mathbb{Z})^r$ for some $N' \geqslant N/I_H$.
\end{lemma}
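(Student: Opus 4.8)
The plan is to produce the required subgroup $H'$ completely explicitly, namely as the image $H' := I_H \cdot G = \{\, I_H g : g \in G \,\}$ of the multiplication-by-$I_H$ endomorphism of $G$. The first step is the elementary observation that $I_H \cdot G \subseteq H$: for every $g \in G$ the class of $g$ in the quotient $G/H$ has order dividing $|G/H| = I_H$ by Lagrange's theorem, hence $I_H g \in H$. Thus the whole problem reduces to identifying the isomorphism type of the subgroup $I_H \cdot G$ of $G \simeq (\mathbb{Z}/N\mathbb{Z})^r$.

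That identification I would carry out coordinatewise. In the cyclic group $\mathbb{Z}/N\mathbb{Z}$ the image of multiplication by $I_H$ is the subgroup generated by $d := \gcd(I_H, N)$, which is cyclic of order $N/d$. Consequently $H' = I_H \cdot G \simeq (\mathbb{Z}/N'\mathbb{Z})^r$ with $N' := N/d$, and $H' \subseteq H$ by the previous step.

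It then remains only to verify the numerical bound. Since $d \mid I_H$ we have $d \leqslant I_H$, whence $N' = N/d \geqslant N/I_H$, as claimed; and the hypothesis $I_H \leqslant N-1$ forces $d \leqslant N-1 < N$, so in fact $N' \geqslant 2$ and $H'$ is a genuinely nontrivial group of the asserted form.

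I do not expect any real obstacle here: the argument is entirely elementary. The only two points that require a sentence of justification are the reduction "index $I_H$ $\Rightarrow$ $I_H \cdot G \subseteq H$", which is Lagrange's theorem applied to $G/H$, and the description of the image of multiplication by $I_H$ in $\mathbb{Z}/N\mathbb{Z}$ in terms of $\gcd(I_H,N)$; both are standard.
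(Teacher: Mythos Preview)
Your proof is correct and takes a genuinely different route from the paper. The paper applies the structure theorem to $H$ itself: writing $H \simeq \bigoplus_{i=1}^{r}\mathbb{Z}/N_i\mathbb{Z}$ with $N_1 \mid N_2 \mid \cdots \mid N_r$ (using that a subgroup of $(\mathbb{Z}/N\mathbb{Z})^r$ needs at most $r$ generators, and that $|H|=N^r/I_H>N^{r-1}$ forces exactly $r$), one has $N_1\cdots N_r=N^r/I_H$ with each $N_i\leqslant N$, whence $N_1\geqslant N/I_H$; then $H'=(\mathbb{Z}/N_1\mathbb{Z})^r\subseteq H$. Your approach avoids the structure theorem entirely by exhibiting $H'=I_H\cdot G$ explicitly and reading off its shape coordinatewise via Lagrange and the description of $I_H\cdot(\mathbb{Z}/N\mathbb{Z})$. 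Your argument is more elementary and yields a canonical $H'$ independent of any choice of generators; the paper's construction can produce a larger $N'$ (for instance, with $G=(\mathbb{Z}/12\mathbb{Z})^2$ and $H=2G$ of index $4$, the paper obtains $N'=6$ while your $N'=N/\gcd(4,12)=3$), but both meet the required bound $N'\geqslant N/I_H$, which is all that is used downstream.
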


\begin{proof} Let $H \subseteq G$ be a subgroup of index $I_H \leqslant N-1$. Then $H$ is a finite abelian group of order at least ~$N(r-1) + 1$. The orders of generators of $H$ do not exceed $N$, so the rank of $H$ is at least $r$. By the structure theorem for finite abelian groups we have $$H \simeq \bigoplus_{1 \leqslant i \leqslant r}\mathbb{Z}/N_i\mathbb{Z}$$ where $N_i | N_{i+1}$ for all $i \in \{1, \ldots, r-1\}$. Next, from the equality $$|H| = N^r/I_H = N_1 \cdots N_r$$ we have $N_1 \geqslant N/I_H$. Now it suffices to take the elementary subgroup $H' = (\mathbb{Z}/N_1\mathbb{Z})^r \subseteq H$.
\end{proof}

\begin{lemma}\label{abelian3} Let $G \simeq (\mathbb{Z}/N\mathbb{Z})^r$ be a finite abelian group and let $H \subset G$ be a subgroup. There exist a set of generators $\{b_1, \ldots, b_r\}$ for $H$ and a set of generators $\{a'_1, \ldots, a'_r\}$ for $G$ such that the embedding ~$H \to G$ can be written as the direct sum of homomorphisms $$\mathbb{Z}/N_i\mathbb{Z} \to \mathbb{Z}/N\mathbb{Z}.$$
\end{lemma}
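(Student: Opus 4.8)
The plan is to lift the whole picture to the free $\mathbb{Z}$-module $\mathbb{Z}^r$ and invoke the stacked basis theorem (equivalently, Smith normal form over $\mathbb{Z}$). Write $G = \mathbb{Z}^r/N\mathbb{Z}^r$ with quotient map $\pi\colon \mathbb{Z}^r \to G$, and set $\tilde H = \pi^{-1}(H) \subseteq \mathbb{Z}^r$. Since $\tilde H \supseteq N\mathbb{Z}^r$, it has finite index in $\mathbb{Z}^r$, hence is a free abelian group of rank $r$. By the stacked basis theorem there exist a basis $e_1, \ldots, e_r$ of $\mathbb{Z}^r$ and positive integers $d_1 \mid d_2 \mid \cdots \mid d_r$ such that $d_1 e_1, \ldots, d_r e_r$ is a basis of $\tilde H$.

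Next I would record the key point: because $N e_i \in N\mathbb{Z}^r \subseteq \tilde H$, each $d_i$ divides $N$; set $N_i = N/d_i$, so that $N_1 \mid N_2 \mid \cdots \mid N_r$ and each $N_i \mid N$. Now put $a'_i = \pi(e_i)$ and $b_i = \pi(d_i e_i) = d_i a'_i$. The elements $a'_1, \ldots, a'_r$ form a basis of $G$ exhibiting $G = \bigoplus_{i} (\mathbb{Z}/N\mathbb{Z})\,a'_i$, and since $\pi$ is surjective we have $H = \pi(\tilde H) = \sum_i \mathbb{Z} b_i$. A relation $\sum_i c_i d_i e_i \in N\mathbb{Z}^r$ forces $c_i d_i \equiv 0 \pmod N$ for each $i$ by comparing coordinates, so the sum is direct and $b_i$ has order $N/d_i = N_i$ in $G$. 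Under the bases $\{b_i\}$ of $H$ and $\{a'_i\}$ of $G$, the inclusion $H \hookrightarrow G$ is then literally the direct sum $\bigoplus_i \bigl(\mathbb{Z}/N_i\mathbb{Z} \to \mathbb{Z}/N\mathbb{Z}\bigr)$, with the $i$-th summand being multiplication by $d_i$.

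There is no serious obstacle here; the statement is a packaging of Smith normal form, and the only step needing a word of justification is the divisibility $d_i \mid N$, which is exactly what makes the $N_i$ honest integers dividing $N$. The remaining work is routine bookkeeping with the quotient map $\pi$ and the observation that applying a surjection to an internal direct sum decomposition, then checking relations coordinate-wise, again yields an internal direct sum decomposition.
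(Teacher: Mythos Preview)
Your proof is correct and follows essentially the same route as the paper's: lift $H$ to its preimage $\tilde H \subseteq \mathbb{Z}^r$, apply Smith normal form (the stacked basis theorem) to the inclusion $\tilde H \subseteq \mathbb{Z}^r$, and push the resulting compatible bases back down through $\pi$. Your version is in fact a bit more careful than the paper's, since you explicitly verify the divisibility $d_i \mid N$ and check the direct-sum decomposition of $H$ coordinate-wise, whereas the paper leaves these as implicit consequences of the fact that $\mathrm{GL}_r(\mathbb{Z})$ preserves $N\mathbb{Z}^r$.
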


\begin{proof} We choose a set of generators $a_1, \ldots, a_r$ for $\mathbb{Z}^r$ such that their images under the projection $$\mathbb{Z}^r \to \mathbb{Z}^r/(N\mathbb{Z})^r$$ give an isomorphism $(\mathbb{Z}/N\mathbb{Z})^r \simeq G$. Let us denote by $\widetilde{H}$ the preimage of $H$ under the map $\mathbb{Z}^r \to G$. Then $\widetilde{H}$ is a free abelian group of rank $r$ containing the subgroup $(N\mathbb{Z})^r$. Let $A$ be a presentation matrix for $\widetilde{H}$. Using, for example,  \cite[Theorem (4.3)]{Art91} we find that there exists the Smith normal form $$A' = QAP^{-1}$$ where $Q, P \in \mathrm{GL}_r(\mathbb{Z})$ and the matrix $A'$ is diagonal with entries $N_1, \ldots, N_r$ such that $N_i$ divides $N_{i+1}$ for any ~$i \in \{1, \ldots, r-1\}$. So there exists a basis $a'_1, \ldots, a'_r$ of $\mathbb{Z}^r$ such that $\widetilde{H}$ is generated by $$b_1 = a'_1N_1, \ldots, b_r = a'_rN_r.$$ Since multiplication by invertible matrices preserves the sublattice $(N\mathbb{Z})^r \subset \mathbb{Z}^r$, we can take the images of $b_1, \ldots, b_r$ under the projection $\mathbb{Z}^r \to \mathbb{Z}^r/(n\mathbb{Z})^r \simeq G$ as generators for $H$.
\end{proof}

For the discussion that follows it will be convenient to introduce the following definition.

\begin{definition}\label{rank2} Let $\{G_i\}_{i \in \mathbb{N}}$ be a sequence of finite groups. We define the {\em asymptotic rank} of the sequence $\{G_i\}$ to be the minimal number $r$ such that the following condition is satisfied. There exists a constant $L$ such that, for infinitely many indices $i \in \mathbb{N}$, we can find an abelian subgroup $H_i \subset G_i$ such that \begin{itemize} \item $H_i$ is generated by $r$ elements; \item the orders of the subgroups $H_i$ are unbounded as $i$ tends to infinity; \item the index of $H_i$ in $G_i$ does not exceed $L$. \end{itemize}
\end{definition}

\begin{example} If the orders of the groups $G_i$ are bounded by a constant then the asymptotic rank of the sequence $\{G_i\}$ is equal to zero. The asymptotic rank of the sequence $G_i = (\mathbb{Z}/i\mathbb{Z})^{r} \times (\mathbb{Z}/2\mathbb{Z})^{10}$ is equal to $r$. The asymptotic rank of the sequence $G_r = (\mathbb{Z}/r\mathbb{Z})^r$ is infinite.
\end{example}

\begin{remark} The motivation for the above definition comes from the study of Jordan groups. Recall from \cite[Definition 2.1]{Pop11} that a group $G$ is Jordan if there exists a constant $J(G) \in \mathbb{N}$ such that for every finite subgroup $H \subset G$ there exists a normal abelian subgroup $A \lhd H$ of index at most $J(G)$. Suppose that the group $G$ is Jordan and that the orders of finite subgroups of $G$ are unbounded. Then there exists a sequence $\{G_i\}_{i \in \mathbb{N}}$ of finite subgroups of $G$ satisfying the assumptions in Definition \ref{rank2} for some $r$ and $L = J(G)$ (the Jordan constant of $G$). The maximum value of $r$ over all such sequences of finite subgroups of $G$ is a natural invariant of the group $G$.
\end{remark}

\begin{remark} The most basic example of a Jordan group is a linear algebraic group $G$ over an algebraically closed field $k$ of characteristic zero. In this case by \cite[Lemma 3.7]{Xu18} there exists a constant $B(n)$ such that, for every connected linear algebraic group $G$ over $k$ of rank at most $n$ and for every finite subgroup $H \subset G$, there exists a finite subgroup $N \subset H$ of index at most $B(n)$ such that $N$ is contained in a maximal torus of $G$. Thus, the asymptotic rank of any sequence of finite subgroups of $G$ is bounded from above by the rank of $G$.
\end{remark}

\begin{remark} \label{elementary} Let $\{G_i\}$ be a sequence of finite groups of asymptotic rank $r$. Then we can take a sequence of abelian subgroups $H_i \subset G_i$ as in Definition \ref{rank2}; then the  asymptotic rank of $\{H_i\}$ is equal to the asymptotic rank of $\{G_i\}$. More generally, if $\{G'_i \subset G_i\}$ is a sequence of subgroups of uniformly bounded index, then the asymptotic ranks of the sequences $\{G'_i\}$ and $\{G_i\}$ are equal. Therefore, by Lemma \ref{abelian}, it suffices to consider sequences of elementary abelian groups.
\end{remark}

For a sequence of finite abelian groups, the asymptotic rank can be computed using the direct sum decomposition provided by the structure theorem.

\begin{proposition} \label{rankab} Let $\{G_i\}_{i \in \mathbb{N}}$ be a sequence of finite abelian groups. Suppose that for every $i \in \mathbb{N}$ the rank of $G_i$ is $r$. Consider the decomposition $$G_i \simeq \bigoplus_{1 \leqslant k \leqslant r}\mathbb{Z}/N_{i,k}\mathbb{Z},$$ where $N_{i,k} | N_{i, k+1}$ for every $k \in \{1, \ldots, r-1\}$. Then the  asymptotic rank of the sequence $\{G_i\}$ is equal to $$r - \max\{k \mid \mbox{the sequence $\{N_{i, k}\}, i \in \mathbb{N}$ is bounded as $i \to \infty$}\}.$$
\end{proposition}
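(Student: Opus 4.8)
The plan is to prove the two inequalities ``$\leqslant$'' and ``$\geqslant$'' separately, working with the invariant
\[
k_0 := \max\{k \mid \text{the sequence } \{N_{i,k}\}_{i\in\mathbb{N}} \text{ is bounded as } i \to \infty\},
\]
so that the claim is that the unbounded rank equals $r - k_0$. (Here I tacitly allow the harmless replacement of $\{G_i\}$ by an infinite subsequence whenever I assert that some $\{N_{i,k}\}$ is bounded or unbounded, since boundedness of $\{N_{i,k}\}$ for one $k$ on an infinite subsequence is what matters; if $k_0 = 0$ the maximum is over the empty set and equals $0$, and the argument below degenerates correctly.)

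First I would prove that the unbounded rank is at most $r - k_0$, by exhibiting, for each $i$ in a suitable infinite subsequence, an explicit abelian subgroup $H_i \subseteq G_i$ generated by $r - k_0$ elements and of bounded index. Using the given decomposition $G_i \simeq \bigoplus_{1 \leqslant k \leqslant r}\mathbb{Z}/N_{i,k}\mathbb{Z}$, take
\[
H_i := \bigoplus_{k_0 < k \leqslant r}\mathbb{Z}/N_{i,k}\mathbb{Z} \subseteq G_i .
\]
This is generated by $r - k_0$ elements, and the index is $|G_i|/|H_i| = \prod_{1 \leqslant k \leqslant k_0} N_{i,k}$, which is bounded over $i$ by definition of $k_0$ (each factor is bounded, and there are finitely many factors). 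Taking $L$ to be this uniform bound on the index, Definition~\ref{rank} is satisfied with $r - k_0$, so the unbounded rank does not exceed $r - k_0$.

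Next I would prove the reverse inequality: the unbounded rank is at least $r - k_0$. Suppose for contradiction that it equals some $r' < r - k_0$, i.e.\ $r' \leqslant r - k_0 - 1$. Then there is a constant $L$ and, for infinitely many $i$, an abelian subgroup $H_i \subseteq G_i$ with $H_i$ generated by $r'$ elements and $|G_i|/|H_i| \leqslant L$. For such $i$ with $N_{i,r} > L$ (which holds for all large $i$ in the subsequence, since $N_{i,k_0+1}$ is unbounded hence so is $N_{i,r} \geqslant N_{i,k_0+1}$, after passing to a further subsequence), the index of $H_i$ in $G_i$ is at most $L \leqslant N_{i,r}-1 < N_{i,r}$, so Lemma~\ref{abelian3} applies: the inclusion $H_i \hookrightarrow G_i$ is, after choosing suitable generators, a direct sum of maps $\mathbb{Z}/M_{i,j}\mathbb{Z} \to \mathbb{Z}/N_{i,\sigma(j)}\mathbb{Z}$. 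Hmm — here I must be slightly careful, since Lemma~\ref{abelian3} as stated is for $G \simeq (\mathbb{Z}/N\mathbb{Z})^r$; I would instead argue directly with the preimage lattice. Let $\widetilde{H}_i \subseteq \mathbb{Z}^r$ be the preimage of $H_i$ under a surjection $\mathbb{Z}^r \to G_i$ with kernel $\Lambda_i := \bigoplus_k N_{i,k}\mathbb{Z}$; then $\widetilde{H}_i$ is a finite-index sublattice of $\mathbb{Z}^r$ containing $\Lambda_i$, and $H_i = \widetilde{H}_i/\Lambda_i$ is generated by $r'$ elements, so $\widetilde{H}_i$ is generated by $r' + r'$... actually $\widetilde{H}_i$ is a rank-$r$ lattice, and the key point is the \emph{elementary divisor} comparison.

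The crux of the argument is the following counting estimate, which I would state as the main step. Since $H_i$ is generated by $r'$ elements, any quotient of $H_i$ is generated by $r'$ elements; in particular, for each prime $p$, the $p$-torsion $(H_i)[p] \cong (\mathbb{Z}/p\mathbb{Z})^{s}$ has $s \leqslant r'$. On the other hand, choose a prime $p$ dividing $N_{i,k_0+1}$ (such a prime exists since $N_{i,k_0+1} \geqslant 2$). Then $p \mid N_{i,k}$ for all $k \geqslant k_0+1$, so $G_i[p] \cong (\mathbb{Z}/p\mathbb{Z})^{t}$ with $t \geqslant r - k_0$. The inclusion $H_i[p] \hookrightarrow G_i[p]$ has cokernel a quotient of $G_i/H_i$, hence of order at most $L$; so $p^{\,t - s} = [G_i[p] : H_i[p]] \leqslant L$, giving $p^{\,(r-k_0) - r'} \leqslant p^{\,t-s} \leqslant L$. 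If we could always choose $p$ with $p$ unbounded as $i \to \infty$, this forces $(r - k_0) - r' \leqslant 0$, i.e.\ $r' \geqslant r - k_0$, a contradiction. The genuine obstacle is exactly this: $N_{i,k_0+1}$ might be unbounded while all its prime factors stay bounded (e.g.\ $N_{i,k_0+1}$ a large power of $2$). I would handle this by replacing ``choose a prime $p \mid N_{i,k_0+1}$'' with ``choose a prime power $q = p^m$ such that $q \mid N_{i,k}$ for all $k > k_0$ and $q \to \infty$'': since $N_{i,k_0+1} \to \infty$, some prime $p$ occurs in $N_{i,k_0+1}$ to a power $p^{m_i}$ with $p^{m_i} \to \infty$ along a subsequence (finitely many primes divide a bounded number, so if all prime-power contributions stayed bounded, $N_{i,k_0+1}$ would be bounded). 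For this $q = p^{m_i}$: $G_i$ has a subgroup $(\mathbb{Z}/q\mathbb{Z})^{r-k_0}$ (coming from the last $r - k_0$ summands, each divisible by $q$), while $H_i$ — being generated by $r'$ elements — has every subgroup isomorphic to $(\mathbb{Z}/q\mathbb{Z})^{s}$ with $s \leqslant r'$; comparing orders of the $q$-analogues via the bounded-index condition gives $q^{\,(r-k_0)-r'} \leqslant L$ along an infinite subsequence with $q \to \infty$, forcing $(r-k_0) - r' \leqslant 0$. This contradiction completes the proof that the unbounded rank is at least $r - k_0$, and combined with the first part it equals $r - k_0$.

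I expect the main obstacle to be precisely the prime-power refinement just described: the naive single-prime argument fails when $N_{i,k_0+1}$ grows only through bounded primes, and one must extract a growing prime-power divisor and compare $q$-torsion (or $q$-cotorsion) subgroups rather than $p$-torsion. The cleanest formulation is probably to phrase everything in terms of the function $n \mapsto (\text{number of generators of } G_i/nG_i)$, or equivalently to invoke Lemma~\ref{abelian3} after first passing to the subgroup $nG_i$ (for $n = N/N_{i,k_0}$ or similar) so that the relevant quotient becomes a genuine $(\mathbb{Z}/N'\mathbb{Z})^{r-k_0}$; this reduces the reverse inequality to Lemma~\ref{abelian}. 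I would also double check the degenerate case $k_0 = 0$ (then $H_i = G_i$, index $1$, unbounded rank $\leqslant r$) and the case where $r - k_0 = 0$ (then all $\{N_{i,k}\}$ are bounded, so $|G_i|$ is bounded and the unbounded rank is $0$ by the first example), confirming the formula holds at the boundary.
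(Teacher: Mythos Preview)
Your proof is correct. The upper bound (taking the ``tail'' subgroup $H_i = \bigoplus_{k>k_0}\mathbb{Z}/N_{i,k}\mathbb{Z}$) is exactly what the paper does. For the lower bound the two arguments diverge.

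The paper keeps the tail subgroups $H_i$ in play: given a hypothetical family $H'_i\subset G_i$ generated by $r'<r-k_0$ elements and of index $\leqslant L'$, it uses the second isomorphism theorem to bound $|H'_i/(H_i\cap H'_i)|\leqslant [G_i:H_i]\leqslant L$, whence
\[
\frac{|G_i|}{|H'_i|}\;\geqslant\;\frac{|G_i|}{L\,|H_i|}\cdot\frac{|H_i|}{|H_i\cap H'_i|},
\]
and then asserts that $[H_i:H_i\cap H'_i]\to\infty$ because $H_i\cap H'_i$ is generated by fewer than $r-k_0$ elements. This last step is stated without justification; one way to fill it in is precisely your prime-power torsion count, or alternatively to note that if $[H_i:H_i\cap H'_i]\leqslant C$ then $C\cdot H_i\subset H_i\cap H'_i$, while for large $i$ the group $C\cdot H_i$ still requires $r-k_0$ generators.

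Your route bypasses the auxiliary $H_i$ entirely: you extract a prime power $q\mid N_{i,k_0+1}$ with $q\to\infty$ and compare $|G_i[q]|\geqslant q^{\,r-k_0}$ against $|H'_i[q]|\leqslant q^{\,r'}$, using $G_i[q]/H'_i[q]\hookrightarrow G_i/H'_i$ to get $q^{\,(r-k_0)-r'}\leqslant L'$. This is more hands-on but fully self-contained, and your observation that one must pass to prime \emph{powers} (not primes) is exactly the subtlety that makes the argument work. The detour through Lemma~\ref{abelian3} and the lattice $\widetilde{H}_i$ is unnecessary and can be dropped; the $q$-torsion comparison already does the job.
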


\begin{proof} We set $$k_{\max} = \max\{k \mid \mbox{the sequence $\{N_{i, k}\}, i \in \mathbb{N}$ is bounded as $i \to \infty$}\}.$$ 
Considering the sequence of subgroups $$H_i = \bigoplus_{k_{\max}+1 \leqslant k \leqslant r}\mathbb{Z}/N_{i,k}\mathbb{Z} \subset G_i,$$ we find that the asymptotic rank of the sequence $\{G_i\}$ is at most $r-k_{\max}$. We denote by $L$ a constant such that $|G_i|/|H_i| \leqslant L$ for all $i \in \mathbb{N}$.

Suppose that the asymptotic rank $r'$ of $\{G_i\}$ is smaller than $r - k_{\max}$. Then there exists a sequence of subgroups $\{H'_i \subset G_i\}$ such that for every $i \in \mathbb{N}$ the group $H'_i$ is generated by $r' < r - k_{\max}$ elements, and the indices $|G_i|/|H'_i|$ are bounded as $i \to \infty$. We have $$|H'_i| = |H'_i/(H_i \cap H'_i)|\cdot |H_i \cap H'_i| \leqslant L \cdot|H_i \cap H'_i|.$$ Hence $$\frac{|G_i|}{|H'_i|} \geqslant \frac{|G_i|}{L\cdot|H_i\cap H'_i|} = \frac{|G_i|}{L|H_i|}\cdot\frac{|H_i|}{|H_i\cap H'_i|}.$$ On the other hand, since the subgroup $H_i \cap H'_i$ is generated by $r' < r - k_{\max}$ elements, the indices $|H_i|/|H_i \cap H'_i|$ are unbounded as $i \to \infty$. This contradiction shows that the asymptotic rank of $\{G_i\}$ is equal to $r - k_{\max}$.
\end{proof}

Another convenient way to express the asymptotic rank of a sequence of finite abelian groups is given in the corollary below.

\begin{corollary}\label{rankab2} Let $\{G_i\}$ be a sequence of finite abelian groups. Suppose that for every $i \in \mathbb{N}$, the group $G_i$ can be generated by $r$ elements. Then the asymptotic rank of $\{G_i\}$ is equal to $$\max\{r \mid \mbox{$G_i \supset (\mathbb{Z}/M_i\mathbb{Z})^r$ for an infinite number of $i \in \mathbb{N}$ and $M_i \to \infty$}\}.$$
\end{corollary}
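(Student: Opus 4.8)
The plan is to deduce the corollary directly from Proposition~\ref{rankab}. For each $i$ fix a decomposition $G_i \simeq \bigoplus_{1 \leqslant k \leqslant r} \mathbb{Z}/N_{i,k}\mathbb{Z}$ with $N_{i,k} \mid N_{i,k+1}$ and set $k_{\max} = \max\{k \mid \text{the sequence } \{N_{i,k}\},\ i \in \mathbb{N},\ \text{is bounded}\}$, so that the unbounded rank of $\{G_i\}$ equals $r - k_{\max}$ by Proposition~\ref{rankab}. Writing $\rho$ for the quantity on the right-hand side of the corollary, it then suffices to prove that $\rho = r - k_{\max}$.

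For the inequality $\rho \geqslant r - k_{\max}$ I would exhibit an explicit family. Since $N_{i,k_{\max}+1} \mid N_{i,k}$ for every $k > k_{\max}$, the subgroup $\bigoplus_{k_{\max} < k \leqslant r} \mathbb{Z}/N_{i,k}\mathbb{Z}$ of $G_i$ contains a copy of $(\mathbb{Z}/N_{i,k_{\max}+1}\mathbb{Z})^{\,r - k_{\max}}$. By the definition of $k_{\max}$ the sequence $\{N_{i,k_{\max}+1}\}$ is unbounded, so after passing to a subsequence we may assume $M_i := N_{i,k_{\max}+1} \to \infty$; then $G_i \supset (\mathbb{Z}/M_i\mathbb{Z})^{\,r - k_{\max}}$ for infinitely many $i$, which is exactly what is required.

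For the reverse inequality the main ingredient is an elementary arithmetic lemma: if $(\mathbb{Z}/M\mathbb{Z})^{\rho}$ embeds into $\bigoplus_{k=1}^{r}\mathbb{Z}/N_k\mathbb{Z}$ with $N_k \mid N_{k+1}$, then $M \mid N_{r-\rho+1}$. To prove it, fix a prime $p$ and let $p^a$ be the exact power of $p$ dividing $M$; restricting the embedding to $p^a$-torsion subgroups yields $(\mathbb{Z}/p^a\mathbb{Z})^{\rho} \hookrightarrow \bigoplus_{k}\mathbb{Z}/p^{\min(a,\, v_p(N_k))}\mathbb{Z}$, and applying the homomorphism $x \mapsto p^{a-1}x$ to both sides turns this into an embedding $(\mathbb{Z}/p\mathbb{Z})^{\rho} \hookrightarrow (\mathbb{Z}/p\mathbb{Z})^{\#\{k \,\mid\, v_p(N_k) \geqslant a\}}$. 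Hence at least $\rho$ of the $N_k$ are divisible by $p^a$, and since the $N_k$ form a divisibility chain this forces $p^a \mid N_{r-\rho+1}$; letting $p$ run over the prime divisors of $M$ gives $M \mid N_{r-\rho+1}$. Applying this lemma to the embeddings $(\mathbb{Z}/M_i\mathbb{Z})^{\rho} \hookrightarrow G_i$ furnished by the definition of $\rho$, we obtain $M_i \mid N_{i,r-\rho+1}$ for infinitely many $i$ with $M_i \to \infty$; therefore the sequence $\{N_{i,r-\rho+1}\}$ is unbounded, so $r - \rho + 1 > k_{\max}$, i.e. $\rho \leqslant r - k_{\max}$.

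The only point requiring a little care is the arithmetic lemma, and specifically the passage to $p$-ranks via the map $x \mapsto p^{a-1}x$; alternatively one can phrase it through the classification of subgroups of finite abelian $p$-groups in terms of Smith normal forms (cf.\ \cite[Theorem~(4.3)]{Art91}). Everything else is a bookkeeping translation through Proposition~\ref{rankab}, so I expect no genuine obstacle beyond this.
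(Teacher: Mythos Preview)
Your argument is correct. The reduction to Proposition~\ref{rankab} and the construction of the subgroups $(\mathbb{Z}/N_{i,k_{\max}+1}\mathbb{Z})^{r-k_{\max}}$ giving the inequality $\rho \geqslant r-k_{\max}$ are exactly what the paper does.

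For the reverse inequality you take a different route. The paper argues softly: given $H'_i \simeq (\mathbb{Z}/M'_i\mathbb{Z})^s \subset G_i$, it looks at the image of $H'_i$ in the quotient $G_i/H_i$, whose order is bounded by a constant $L$ independent of $i$; hence $H'_i\cap H_i$ has index at most $L$ in $H'_i$, while being a subgroup of $H_i$ and therefore generated by at most $r-k_{\max}$ elements of order dividing $M'_i$. This forces $(M'_i)^s \leqslant L\,(M'_i)^{r-k_{\max}}$, which is impossible for $s>r-k_{\max}$ once $M'_i\to\infty$. Your approach instead proves the sharp arithmetic fact $M \mid N_{r-\rho+1}$ by a $p$-adic count, and then reads off the unboundedness of $\{N_{i,r-\rho+1}\}$ directly. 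Your lemma is slightly stronger than what is needed (it gives actual divisibility, not just an inequality of sizes) and avoids invoking the bounded-index subgroup $H_i$; the paper's argument is shorter and stays at the level of group orders. Either way the bookkeeping through $k_{\max}$ is the same, and your passage from ``$\{N_{i,r-\rho+1}\}$ unbounded'' to ``$r-\rho+1>k_{\max}$'' is correct.
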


\begin{proof} By Proposition \ref{rankab}, the asymptotic rank of the sequence $\{G_i\}$ is $r - k_{\max}$, where $$k_{\max} = \max\{k \mid \mbox{the sequence $\{N_{i, k}\}, i \in \mathbb{N}$ is bounded as $i \to \infty$}\}.$$ Consider the sequence of subgroups $$H_i = \bigoplus_{k_{\max}+1 \leqslant k \leqslant r}\mathbb{Z}/N_{i,k}\mathbb{Z} \subset G_i.$$ Then each $H_i$ contains a subgroup isomorphic to $(\mathbb{Z}/M_i\mathbb{Z})^{r-k_{\max}}$ where $M_i = N_{i, k_{\max} + 1}$. Suppose that for infinitely many $i \in \mathbb{N}$ we can find subgroups $H'_i \subset G_i$ such that $H'_i \simeq (\mathbb{Z}/M'_i\mathbb{Z})^s$ for $M_i \to \infty$. Consider the images of $H'_i$ under the quotient homomorphisms $G_i \to G_i/H_i$. Since the indices $|G_i|/|H_i|$ are bounded by a constant $L$ independent of $i \in \mathbb{N}$, the number $s$ does not exceed $r - k_{\max}$.
\end{proof}

We deduce the following important subadditivity property for asymptotic ranks.

\begin{lemma}\label{abelianmain} Let $\{G_i\}$ be a sequence of abelian groups. Consider a sequence of subgroups $G'_i \subset G_i$ for $i \in \mathbb{N}$, and denote the quotient groups by $G''_i$. Suppose that the asymptotic rank of the sequence $\{G'_i\}$ is at most $r'$ and that the asymptotic rank of the sequence $\{G''_i\}$ is at most $r''$. Then the asymptotic rank $r$ of the sequence $\{G_i\}$ is at most $r'+r''$.
\end{lemma}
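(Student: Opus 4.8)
The plan is to argue purely by counting group orders, using multiplicativity $|G_i| = |G'_i|\cdot|G''_i|$ together with the structure theorem for finite abelian groups and Proposition \ref{rankab}. (Note that by Proposition \ref{rankab} the unbounded rank of $\{G_i\}$ is exactly $r$, since every invariant factor of $G_i$ equals $N_i \to \infty$; so it suffices to prove $r \leqslant r' + r''$, and we may assume $r > r'$ and $r > r''$, as otherwise there is nothing to show.)

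First I would apply the structure theorem to write, for each $i$,
$$G'_i \simeq \bigoplus_{1 \leqslant k \leqslant r} \mathbb{Z}/d'_{i,k}\mathbb{Z}, \qquad G''_i \simeq \bigoplus_{1 \leqslant k \leqslant r} \mathbb{Z}/d''_{i,k}\mathbb{Z},$$
with $d'_{i,k} \mid d'_{i,k+1}$ and $d''_{i,k} \mid d''_{i,k+1}$, where some of the smaller invariant factors may be equal to $1$, so that both groups are formally generated by $r$ elements. Since $G'_i$ is a subgroup and $G''_i$ a quotient of $G_i \simeq (\mathbb{Z}/N_i\mathbb{Z})^r$, every element of $G'_i$ and of $G''_i$ has order dividing $N_i$; hence $d'_{i,k} \mid N_i$ and $d''_{i,k} \mid N_i$ for all $k$ and $i$.

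Next I would invoke Proposition \ref{rankab}. As the unbounded rank of $\{G'_i\}$ is at most $r'$, the number of indices $k$ for which $\{d'_{i,k}\}_{i}$ is unbounded is at most $r'$; since the $d'_{i,k}$ are nondecreasing in $k$, this yields a constant $B'$ with $d'_{i,k} \leqslant B'$ for all $k \leqslant r - r'$ and all $i$. Likewise there is a constant $B''$ with $d''_{i,k} \leqslant B''$ for all $k \leqslant r - r''$ and all $i$. Combining with $d'_{i,k} \leqslant N_i$ and $d''_{i,k} \leqslant N_i$ gives
$$|G'_i| = \prod_{k=1}^{r} d'_{i,k} \leqslant (B')^{\,r-r'}\,N_i^{\,r'}, \qquad |G''_i| = \prod_{k=1}^{r} d''_{i,k} \leqslant (B'')^{\,r-r''}\,N_i^{\,r''}.$$

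Finally, from $N_i^{r} = |G_i| = |G'_i|\cdot|G''_i| \leqslant (B')^{r-r'}(B'')^{r-r''}\,N_i^{\,r'+r''}$ I would deduce $N_i^{\,r-r'-r''} \leqslant (B')^{r-r'}(B'')^{r-r''}$, a bound independent of $i$; since $N_i \to \infty$ this forces $r - r' - r'' \leqslant 0$. The only delicate point is the bookkeeping of invariant factors — in particular the monotonicity argument upgrading ``at most $r'$ unbounded invariant factors'' to ``the $r-r'$ smallest invariant factors are uniformly bounded'' — but this is routine, and I do not expect a genuine obstacle.
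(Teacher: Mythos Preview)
Your argument is correct, and it takes a genuinely different route from the paper. The paper first invokes Lemma~\ref{abelian3} (Smith normal form) to choose compatible generators so that the embedding $G'_i \hookrightarrow G_i$ is diagonal; this gives an explicit description $G''_i \simeq \bigoplus_j \mathbb{Z}/(N_i/N'_{i,j})\mathbb{Z}$, and then the key observation is that for each fixed $j$ the sequences $\{N'_{i,j}\}_i$ and $\{N_i/N'_{i,j}\}_i$ cannot both be bounded (since $N_i \to \infty$), so the ``bounded'' index sets for $G'_i$ and $G''_i$ are disjoint inside $\{1,\dots,r\}$, giving $(r-r') + (r-r'') \leqslant r$. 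Your proof bypasses the compatible-basis step entirely: you treat $G'_i$ and $G''_i$ independently, extract from Proposition~\ref{rankab} the crude order bounds $|G'_i| \leqslant (B')^{r-r'} N_i^{r'}$ and $|G''_i| \leqslant (B'')^{r-r''} N_i^{r''}$, and then multiplicativity $N_i^r = |G'_i|\,|G''_i|$ immediately forces $r \leqslant r'+r''$. This is more elementary and shorter --- in particular it makes Lemma~\ref{abelian3} unnecessary for the present lemma --- while the paper's approach yields the finer structural statement that the bounded invariant-factor positions of $G'_i$ and $G''_i$ are complementary, information which is however not used later.
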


\begin{proof} By Remark \ref{elementary} we may assume that $G_i \simeq (\mathbb{Z}/N_i\mathbb{Z})^r$ are elementary abelian subgroups for some $N_i$ that tend to infinity. By Lemma \ref{abelian3} we may choose compatible systems of generators in $G'_i$ and $G_i$ for every $i \in \mathbb{N}$. Let us denote by $$N'_{i, 1} | N'_{i, 2} | \cdots | N'_{i, r}$$ the divisors in the decomposition of $G'_i$ given by the structure theorem. Then the quotient groups $G''_i$ are isomorphic to $$\bigoplus_{1 \leqslant j \leqslant r}\mathbb{Z}/\frac{N_i}{N'_{i,j}}\mathbb{Z}.$$ By the assumption, the asymptotic rank of the sequence $\{G'_i\}$ is at most $r'$. By Proposition \ref{rankab}, $$r - \max\{j \mid \mbox{the sequence $\{N_{i, j}\}$ is bounded as $i \to \infty$}\} \leqslant r'.$$ Similarly, since the asymptotic rank of the sequence $\{G''_i\}$ is at most $r''$, we have, by Proposition \ref{rankab}, $$r - \min\{j \mid \mbox{the sequence $\{\frac{N_i}{N'_{i, j}}\}$ is bounded as $i \to \infty$}\} \leqslant r''.$$ However, since $N_i$ tend to infinity, the sequences $\{N'_{i,j}\}$ and $\{N_i/N'_{i,j}\}$ for a fixed $j \in \{1, \ldots, r\}$ cannot be bounded simultaneously. Therefore, adding the above inequalities we obtain $$r \leqslant r' + r'',$$ the result required.
\end{proof}

\subsection{The MRC fibration} 

In this subsection we briefly recall the construction of the maximal rationally connected (MRC) fibration for a compact K\"ahler manifold $X$. In this generality, the existence of the MRC fibration was established in \cite{Cam04}. We refer to \cite{Cam04} for details, including the definition of the cycle space (Barlet space) $\mathcal{C}(X)$ for a compact complex space $X$. For a purely algebraic proof of this result in the case of projective algebraic varieties, see \cite[Th\'eor\`eme 2.3]{Cam92} or \cite{KMM92}.

\begin{definition} A covering family of cycles on a complex space $X$ is a complex subspace $S \subset \mathcal{C}(X)$ such that \begin{itemize} \item $S$ is a countable union of compact irreducible complex subspaces; \item For $s \in S_i$ a general point the cycle $Z_s$ is irreducible and reduced; \item $X$ is a union of $\mathrm{Supp}(Z_s)$ for $s \in S$.\end{itemize}
\end{definition} 

A covering family of cycles induces an equivalence relation $R(S)$ on points of $X$. Namely, two points $x, y \in X$ are equivalent if and only if they are contained in a connected union of a finite number of cycles parameterized by $S$. The following theorem (see \cite[Theorem 1.1]{Cam04} for the proof) shows the existence of meromorphic reduction maps for covering families of cycles. Recall that a {\em fibration} is a dominant meromorphic map of normal complex spaces with connected fibers. A {\em typical fiber} of a fibration is a fiber over a point in the complement to a proper analytic subset in the base.

\begin{theorem}\label{mrc} Let $X$ be a normal compact connected complex space. Let $S \subset \mathcal{C}(X)$ be a covering family of cycles on $X$. Denote by $R(S)$ the equivalence relation on $X$ induced by $S$. Then there exists a meromorphic fibration $q_S \colon X \dasharrow B_S$ such that a typical fiber of $q_S$ is an equivalence class for $R(S)$. 
\end{theorem}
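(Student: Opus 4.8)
The plan is to follow Campana's construction of the quotient by a covering family of cycles: one builds, inside the cycle space $\mathcal{C}(X)$, the family of \emph{$S$-chains} (connected finite unions of cycles parametrized by $S$), shows that the chain through a general point stabilizes to the $R(S)$-equivalence class, and checks that these classes form a single irreducible family whose total space maps bimeromorphically onto $X$.

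First I would set up the chains. Let $I_1 = \{(x,s) \in X \times S : x \in \mathrm{Supp}(Z_s)\}$, with its projections to $X$ and to $S$; the projection to $S$ is, after the obvious base change, the universal cycle over $S$. Inductively, $S^{(k+1)}$ is obtained from $S^{(k)}$ by attaching one more cycle of $S$ meeting the current chain: this locus is cut out by incidence conditions inside a product of the relevant pieces of $\mathcal{C}(X)$, and its image in $\mathcal{C}(X)$ — the family of chains of length $\leqslant k+1$ — is again a countable union of compact irreducible subvarieties, by the structure and properness properties of the Barlet cycle space. For a general point $x \in X$, let $Z_k(x) \subset X$ denote the union of the supports of the members of $S^{(k)}$ through $x$.

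Second I would prove stabilization. On a dense subset of $X$ (the complement of a countable union of proper analytic subsets), $\dim Z_k(x)$ equals a constant $d_k$; the sequence $d_1 \leqslant d_2 \leqslant \cdots$ is bounded by $\dim X$, so $d_k = d$ for all $k \geqslant k_0$ and some $k_0$. For $x$ general, $Z_{k_0}(x)$ is then irreducible of dimension $d$ — a connected union of $d$-dimensional pieces that is stable under adjoining further cycles of $S$ — and it is precisely the $R(S)$-equivalence class of $x$. Moreover the assignment $x \mapsto [Z_{k_0}(x)]$ defines a meromorphic map $X \dasharrow \mathcal{C}(X)$; let $B_S$ be the normalization of the closure of its image, an irreducible compact complex space, and let $e \colon \mathcal{Z} \to X$ be the evaluation map from the total space of the associated family of cycles, together with the projection $\pi \colon \mathcal{Z} \to B_S$. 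Since $S$ is covering, the classes $Z_{k_0}(x)$ cover $X$, so $e$ is surjective; since the equivalence classes partition $X$, a general point of $X$ lies on exactly one member of $B_S$, so $e$ is generically one-to-one, and as $X$ is normal and irreducible with $\dim \mathcal{Z} = \dim X$, the map $e$ is bimeromorphic. Then $q_S := \pi \circ e^{-1} \colon X \dasharrow B_S$ is meromorphic, and a general fiber of $q_S$ is, via $e$, a general member of the family, i.e. an $R(S)$-equivalence class; since these classes are irreducible, $q_S$ is a fibration.

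The step I expect to be the main obstacle is the family statement underlying the first two paragraphs: that the iterated unions of cycles genuinely organize into analytic subvarieties of $\mathcal{C}(X)$ and that the generic chain-dimension is well defined and eventually constant. Making this precise requires careful use of the structure of the Barlet cycle space — each $S^{(k)}$ a countable union of compact irreducible subvarieties with proper evaluation maps — together with semicontinuity of fibre dimension and a ``general point'' argument (legitimate because an irreducible complex space is not a countable union of proper analytic subsets) in order to pass from the pointwise description of $Z_k(x)$ to a single irreducible family $B_S$. Once this analytic bookkeeping is in place, the bimeromorphy of $e$ and hence the existence of $q_S$ follow formally; this is exactly what is carried out in \cite{Cam04}.
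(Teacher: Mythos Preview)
Your outline is a faithful sketch of Campana's quotient-by-a-covering-family construction, and that is exactly what the paper invokes: the paper does not give its own proof of this theorem but states it as background and refers to \cite{Cam04} (and \cite{Cam81}) for the argument. So there is nothing to compare beyond noting that your proposal reproduces the cited proof rather than an original one in the paper.
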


An important result, proved independently in \cite{Fuj78} and \cite{Lie78}, is the compactness of the irreducible components of $\mathcal{C}(X)$ in the K\"ahler case.

\begin{theorem} \label{compact} Let $X$ be a compact K\"ahler manifold. Then each irreducible component of the cycle space $\mathcal{C}(X)$ is compact.
\end{theorem}

As a consequence of Theorem \ref{compact}, for a compact K\"ahler manifold one can define the following natural meromorphic fibration.

\begin{definition}\label{mrcdef} Let $X$ be a compact K\"ahler manifold and let $S$ be a family of all rational curves on ~$X$. The fibration $f \colon X \dasharrow B$ corresponding to $S$ by Theorem \ref{mrc} is called the maximal rationally connected (MRC) fibration of $X$. 
\end{definition}

Obviously, if $X$ is not covered by rational curves then $f$ is birational. A crucial property of the MRC fibration is that its base $B$ is not covered by rational curves. This statement was shown in \cite[Corollary ~1.4]{GHS03} for $X$ an algebraic variety. The same argument generalizes to the K\"ahler case (see, for instance, \cite[Remark 3.2]{HP15} or \cite[Proposition 3.8]{PS20}).

\begin{theorem}\label{ghs} Let $X$ be a compact K\"ahler manifold. Consider the MRC fibration $$f \colon X \dasharrow B.$$ Then the base $B$ is not uniruled.
\end{theorem}

Moreover, the smooth fibers of the MRC fibration of a compact K\"ahler manifold $X$ are in fact projective, see \cite[Theorem 3.9]{PS20} for the proof.

\begin{proposition} \label{RatKah} Let $X$ be a rationally connected compact K\"ahler manifold. Then $X$ is projective.
\end{proposition}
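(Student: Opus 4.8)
The plan is to reduce to the smooth case by resolution of singularities and then invoke the known characterization of projectivity for rationally connected Kähler manifolds. First I would take a resolution $\pi \colon \widetilde{X} \to X$, where $\widetilde{X}$ is a smooth compact Kähler manifold; since $X$ is rationally connected and $\pi$ is birational (hence bimeromorphic), $\widetilde{X}$ is again rationally connected. Rational connectedness of a smooth compact Kähler manifold forces $H^0(\widetilde{X}, \Omega^p_{\widetilde{X}}) = 0$ for all $p \geqslant 1$: every global holomorphic $p$-form restricts to zero on each rational curve, and a very general pair of points being joined by a rational curve lets one propagate this vanishing, or more simply one uses that $\widetilde{X}$ is rationally connected $\Rightarrow$ $H^0(\widetilde{X}, (\Omega^1_{\widetilde{X}})^{\otimes m}) = 0$ for all $m \geqslant 1$. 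In particular $h^{2,0}(\widetilde{X}) = h^0(\widetilde{X}, \Omega^2_{\widetilde{X}}) = 0$.

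Next I would apply the Kodaira embedding / projectivity criterion in the form: a compact Kähler manifold $\widetilde{X}$ with $H^{2,0}(\widetilde{X}) = 0$ is projective. Indeed, the Hodge decomposition gives $H^2(\widetilde{X}, \mathbb{C}) = H^{2,0} \oplus H^{1,1} \oplus H^{0,2}$, and $H^{2,0} = 0$ together with its conjugate $H^{0,2} = 0$ forces $H^2(\widetilde{X}, \mathbb{C}) = H^{1,1}(\widetilde{X})$; hence the Kähler cone is an open subset of $H^{1,1}(\widetilde{X}, \mathbb{R}) = H^2(\widetilde{X}, \mathbb{R})$, which meets the rational lattice $H^2(\widetilde{X}, \mathbb{Q})$, so $\widetilde{X}$ carries a rational Kähler class and is projective by the Kodaira embedding theorem. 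Finally, $X$ is the image of the projective manifold $\widetilde{X}$ under the proper bimeromorphic morphism $\pi$; since $X$ is normal, it is the image of a projective variety under a proper birational morphism, hence $X$ is a Moishezon space, and a Moishezon space that is also Kähler is projective. This gives projectivity of $X$.

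The main obstacle, I expect, is verifying the vanishing $H^0(\widetilde{X}, \Omega^p) = 0$ carefully in the compact Kähler (rather than projective) setting and confirming that the MRC-theoretic notion of rational connectedness used here (every two general points joined by a chain of rational curves) implies this vanishing on the resolution. Once that is in place, the passage $H^{2,0} = 0 \Rightarrow$ projective for Kähler manifolds, and the descent of projectivity from $\widetilde{X}$ to the normal Kähler space $X$ via Moishezon-plus-Kähler, are standard. One should be mildly careful that rational connectedness is a bimeromorphic invariant and that $\widetilde X$ can indeed be chosen Kähler (true, since $X$ is Kähler and resolutions of Kähler spaces are Kähler), but these are routine.
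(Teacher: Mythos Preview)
The paper does not supply its own proof of this proposition; it is stated as a known fact and used as a black box in the proof of Theorem~1.6. So there is nothing to compare against, and the relevant question is simply whether your outline is sound.

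Your strategy is the standard one and is essentially correct: pass to a smooth K\"ahler resolution $\widetilde{X}$, argue that rational connectedness forces $h^{2,0}(\widetilde{X})=0$, apply Kodaira's projectivity criterion to $\widetilde{X}$, and then descend projectivity to $X$ via the Moishezon-plus-K\"ahler argument. You also correctly flag the one genuinely delicate point: the vanishing $H^0(\widetilde{X},\Omega^p)=0$ in the K\"ahler (as opposed to projective) setting. This is not circular, because the deformation theory of morphisms $\mathbb{P}^1\to\widetilde{X}$ (Douady/Barlet spaces, smoothing of combs, existence of very free curves) works in the analytic category without projectivity assumptions; once a very free curve exists through a general point, the usual negativity argument on $f^*\Omega^p_{\widetilde{X}}$ kills global $p$-forms. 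So your concern is well-placed but resolvable.

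Two small points. First, you assume $X$ is normal in the descent step, but the proposition does not; this is harmless, since one may first pass to the normalization (a finite bimeromorphic map preserving both the K\"ahler condition and rational connectedness) and then descend projectivity along the finite map. Second, ``resolutions of K\"ahler spaces are K\"ahler'' should be read as ``there exists a K\"ahler resolution'' (e.g.\ one obtained by successive blow-ups along smooth centers), which is all you need.
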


\subsection{Finite group actions}

We will need a well-known result (see e. g. \cite[Lemma 3.1]{PS14}) on existence of regularizations of birational actions of finite groups.

\begin{proposition}\label{reg} Let $X$ be a normal projective variety and let $G \subset \Bir(X)$ be a finite group. Then there exists a smooth projective variety $\overline{X}$ with a regular action of $G$ and a $G$-equivariant birational map $$\varphi \colon \overline{X} \dasharrow X.$$
\end{proposition}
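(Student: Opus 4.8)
The plan is to use the classical graph–closure construction. Fix an enumeration $G = \{g_1, \ldots, g_n\}$ with $g_1 = e$, and consider the birational map
$$\iota \colon X \dasharrow \prod_{g \in G} X, \qquad \iota(x) = \bigl(g^{-1}\cdot x\bigr)_{g \in G},$$
defined on the dense open subset of $X$ over which every $g^{-1} \in \Bir(X)$ restricts to a regular map. Equip the product $\prod_{g \in G} X$ with the regular $G$-action permuting the factors by the left regular representation, $h \cdot (x_g)_{g \in G} = (x_{h^{-1}g})_{g \in G}$. A direct check gives, for $h \in G$, the identity $\bigl(\iota(h\cdot x)\bigr)_g = (g^{-1}h)\cdot x = \bigl(h\cdot\iota(x)\bigr)_g$, so $\iota$ intertwines the birational $G$-action on the source with the (regular) permutation action on the target.

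Now let $Y \subset \prod_{g\in G} X$ be the Zariski closure of the image of $\iota$. Since $\prod_{g\in G} X$ is projective, so is $Y$; since $\iota$ is $G$-equivariant and the $G$-action on the ambient product is regular, $Y$ is a $G$-invariant closed (irreducible, reduced) subvariety. Projection to the factor indexed by $e$ satisfies $\mathrm{pr}_e \circ \iota = \mathrm{id}_X$, so $\mathrm{pr}_e \colon Y \to X$ is a birational morphism with inverse $\iota$; consequently $\iota^{-1}\colon Y \dasharrow X$ is a $G$-equivariant birational map, where $G$ acts regularly on $Y$ and birationally (in the original sense) on $X$. This already produces a projective model with a regular $G$-action birational to $X$.

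It remains to pass to a smooth model without destroying regularity of the action. Replace $Y$ by its normalization $\nu\colon Y^{\nu}\to Y$: by functoriality of normalization the $G$-action lifts to $Y^{\nu}$ and stays regular. Then, since the base field has characteristic zero, apply $G$-equivariant resolution of singularities (functorial resolution, which automatically commutes with the group action) to obtain a smooth projective variety $\overline{X}$ carrying a regular $G$-action together with a $G$-equivariant birational morphism $\pi\colon \overline{X} \to Y^{\nu}$. Setting $\varphi = \iota^{-1}\circ\nu\circ\pi$ yields the desired $G$-equivariant birational map $\varphi\colon \overline{X}\dasharrow X$. The only non-elementary ingredient is the existence of equivariant (functorial) resolution; the rest is the formal verification that the graph closure $Y$ is $G$-stable, which is exactly where the cocycle-type identity $\iota(h\cdot x) = h\cdot\iota(x)$ is used, and this is the step I would be most careful to state cleanly.
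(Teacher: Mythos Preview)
Your argument is correct and is the classical Weil regularization via the graph closure; the verification of $G$-equivariance of $\iota$ and the use of functorial resolution are both sound.

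The paper takes a slightly different route: it first restricts to a $G$-invariant (affine) open subset on which the birational action becomes regular, then invokes Sumihiro's equivariant completion theorem \cite{Sum74} to pass back to a projective variety with a regular $G$-action, and finally applies $G$-equivariant resolution. Your approach bypasses both of these steps---you never need to locate a $G$-invariant affine open, nor do you appeal to Sumihiro---at the cost of writing down the diagonal graph embedding and checking the cocycle identity by hand. Conversely, the paper's proof is shorter because it defers everything to \cite{Sum74} and \cite{BM97}. In practice your version is what the cited reference \cite[Lemma--Definition~3.1]{PS14} actually does, so the two proofs converge once one unwinds the citations; your write-up is the more self-contained of the two.
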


\begin{proof} 
Replacing $X$ by an affine open subset, we may assume that the action of $G$ on $X$ is regular. Then by \cite[Theorem 3]{Sum74}, there exists a $G$-equivariant projective completion $$\varphi \colon \overline{X} \dasharrow X.$$ Replacing $\overline{X}$ by a $G$-equivariant resolution of singularities of $\overline{X}$ (see e. g. \cite{BM97}) we may assume $\overline{X}$ to be smooth.
\end{proof}

The following proposition shows that actions of finite groups by automorphisms can be linearized in the fixed points. For the proof in the complex analytic setup we refer to  \cite[p. 38]{Akh95}.

\begin{proposition}\label{faith} Let $G$ be a finite group acting on a compact complex space $X$ by biholomorphic automorphisms with a fixed point $p \in X$. Then the induced action of $G$ on the tangent space $T_p(X)$ is faithful. 
\end{proposition}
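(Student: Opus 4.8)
The statement to prove is Proposition \ref{faith}: if a finite group $G$ acts on a variety $X$ with a fixed point $p$, then the induced linear action of $G$ on $T_p(X)$ is faithful. The plan is to reduce everything to a local-analytic (or formal) statement near $p$ and then invoke the fact that an automorphism of a local ring which is the identity on the tangent space, and which has finite order, must be the identity. First I would restrict attention to an affine $G$-stable neighbourhood of $p$, so that we may assume $X$ is affine with coordinate ring $R$ and $\mathfrak{m} = \mathfrak{m}_p$ is the maximal ideal of $p$; the $G$-action fixes $p$, hence preserves $\mathfrak{m}$ and each power $\mathfrak{m}^k$, and $T_p(X)^\vee = \mathfrak{m}/\mathfrak{m}^2$.

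Now suppose $g \in G$ acts trivially on $\mathfrak{m}/\mathfrak{m}^2$; I want to show $g$ acts trivially on $\widehat{R} = \varprojlim R/\mathfrak{m}^k$, which by separatedness of the $\mathfrak m$-adic topology (Krull intersection) forces $g$ to be trivial on $R$ itself, at least after passing to the local ring $\OO_{X,p}$, which suffices since $\Aut$ acts faithfully on a dense open set. The key step is an induction on $k$: if $g$ is the identity on $R/\mathfrak{m}^k$, then for $x \in \mathfrak{m}$ we have $g(x) - x \in \mathfrak{m}^k$, and one checks using the Leibniz-type identity $g(xy) - xy = (g(x)-x)g(y) + x(g(y)-y)$ that the map $x \mapsto g(x) - x \bmod \mathfrak{m}^{k+1}$ defines a $k$-linear derivation-like map $\mathfrak m/\mathfrak m^2 \to \mathfrak m^k/\mathfrak m^{k+1}$; so the "defect" $\delta_k(g)$ of $g$ at level $k$ lies in a vector space and behaves additively under composition, i.e. $\delta_k(g^n) = n\,\delta_k(g)$ for all $n$ (this is where finite order enters). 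Since $g$ has finite order $n$, $g^n = \mathrm{id}$ gives $n\,\delta_k(g) = 0$ in a $\QQ$-vector space (characteristic zero!), hence $\delta_k(g) = 0$, i.e. $g$ is the identity on $R/\mathfrak{m}^{k+1}$. The base case $k = 2$ is the hypothesis. Thus $g$ is trivial on $\widehat{R}$, hence on $\OO_{X,p}$, hence on $X$.

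An even cleaner route, which I would probably adopt to keep the writeup short, linearizes the action outright: by a theorem of Cartan (or, algebraically, by averaging), a finite group action fixing a point is locally analytically/formally linearizable, i.e. there is a $G$-equivariant formal isomorphism between $(\widehat{\OO_{X,p}}, G)$ and $(\widehat{\OO_{T_p X, 0}}, G)$ with $G$ acting linearly on the target. Concretely, one averages an arbitrary system of coordinates $x_1, \dots, x_n$ over $G$: replacing $x_i$ by $\frac{1}{|G|}\sum_{g \in G} \rho(g)^{-1}_{ij} g^*(x_j)$, where $\rho$ is the representation on $\mathfrak m/\mathfrak m^2$, produces coordinates on which $G$ acts linearly via $\rho$, because the first-order part of $g^*(x_i)$ is already $\rho$ and the averaging kills the higher-order discrepancy. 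Granting this, $g$ acting trivially on $T_p X$ means $\rho(g) = \mathrm{id}$, hence $g$ acts trivially in the linearizing coordinates, hence $g = \mathrm{id}$ near $p$ and therefore on all of $X$ (as $X$ is a variety, an automorphism trivial on a nonempty open set is trivial).

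The main obstacle — really the only subtle point — is justifying that triviality of the formal/local action implies triviality of $g$ as a birational or biregular selfmap, i.e. that one cannot have a nontrivial automorphism that is "formally trivial" at $p$; this is handled by the Krull intersection theorem ($\bigcap_k \mathfrak{m}^k = 0$ in the Noetherian local ring $\OO_{X,p}$) together with the fact that a selfmap of an irreducible variety agreeing with the identity on a Zariski-dense subset is the identity. I would also flag that characteristic zero is essential: in characteristic $p > 0$ the element $\delta_k(g)$ need not vanish (e.g. $x \mapsto x + x^p$ on $\mathbb{A}^1$ in char $p$ has trivial action on the tangent space but is nontrivial), and indeed the statement is false there — consistent with the paper's standing hypothesis. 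Everything else is bookkeeping with the $\mathfrak m$-adic filtration.
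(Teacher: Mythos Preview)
Your proposal is correct. The paper does not prove this proposition at all; it simply records the statement with a citation to \cite[p.~38]{Akh95}. Both of your routes are standard and valid: the $\mathfrak{m}$-adic filtration argument, with the key identity $\delta_k(g^n)=n\,\delta_k(g)$ forcing $\delta_k(g)=0$ in characteristic zero, makes the roles of finite order and the characteristic hypothesis fully explicit, while the averaging/linearization route is closer in spirit to what one finds in the cited reference (which works in the analytic category and linearizes a finite-order automorphism at a fixed point). Either would serve as a self-contained replacement for the bare citation.

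One small correction to your closing remark: the automorphism $x\mapsto x+x^p$ of $\mathbb{A}^1$ in characteristic $p$ is not of finite order, so it does not directly witness failure of the proposition for a \emph{finite} group $G$. A genuine counterexample is the order-$p$ automorphism $[x:y]\mapsto[x+y:y]$ of $\mathbb{P}^1$ in characteristic $p$: it fixes the point $[1:0]$, and in the local coordinate $t=y/x$ it acts by $t\mapsto t/(1+t)=t-t^2+\cdots$, hence trivially on the tangent line. This affects only the illustrative aside, not the proof itself.
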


\section{Main results}

\subsection{Groups of pseudoautomorphisms}

In this section we extend Theorem \ref{mundet1} to automorphism groups of singular compact K\"ahler spaces. For a complex space $X$ we denote the subsets of its singular and non-singular points by $X_{\mathrm{sing}}$ and $X_{\mathrm{ns}}$, respectively.

\begin{definition} \label{singkahler} Let $X$ be an irreducible and reduced complex space. A {\em K\"ahler form} on $X$ is a closed positive real $(1,1)$-form $\omega$ on $X_{\mathrm{ns}}$ satisfying the following condition: for any $x \in X_{\mathrm{sing}}$ there exists an open neighborhood $x \in U \subset X$ with a closed embedding $i_U \colon U \subset V$ into an open subset $V \subset \mathbb{C}^N$ such that $$\omega|_{U \cap X_{\mathrm{ns}}} = i\partial\bar\partial f|_{U \cap X_{\mathrm{ns}}}$$ for a smooth strictly plurisubharmonic function $f \colon V \to \mathbb{C}$. An irreducible and reduced complex space $X$ is {\em K\"ahler} if there exists a K\"ahler form on $X$.
\end{definition}

\begin{remark} Below we consider only those singular K\"ahler spaces that are normal and have rational singularities. In particular, minimal and quasi-minimal compact K\"ahler spaces (or complex projective varieties) satisfy these conditions.
\end{remark}

\begin{remark} If $X$ is a singular K\"ahler space, one can always find a resolution of singularities $\varphi \colon X' \to X$ where $X'$ is a compact K\"ahler manifold \cite[Remark 2.3]{HP16}. The MRC fibration for $X$ can be defined as the MRC fibration of (any) compact K\"ahler manifold $X'$ bimeromorphic to $X$.
\end{remark}

We need the following simple lemma (cf. \cite[Lemma 9.11]{Ue75}).

\begin{lemma}\label{torus} Let $X$ be a normal compact K\"ahler space. Suppose that there exists a bimeromorphic morphism $$\varphi \colon T \to X,$$ where $T$ is a compact complex torus. Then $\varphi$ is an isomorphism.
\end{lemma}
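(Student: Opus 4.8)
Suppose, aiming at a contradiction, that $\varphi$ is not an isomorphism. Since $\varphi$ is a proper bimeromorphic morphism onto the normal space $X$, we have $\varphi_*\mathcal{O}_T=\mathcal{O}_X$, and by Zariski's main theorem $\varphi$ is an isomorphism over the complement of $\varphi(\Exc(\varphi))$, which has codimension $\geqslant 2$ in $X$; in particular $\Exc(\varphi)\neq\emptyset$ and every irreducible component of $\Exc(\varphi)$ is contracted by $\varphi$, while conversely if $\varphi$ contracts no curve it is finite, hence an isomorphism onto $X$. So everything comes down to showing that no irreducible curve $C\subset T$ is contracted by $\varphi$, and for this it is enough to prove that such a $C$ would have to be rational: $T$ contains no rational curve, because any morphism $\mathbb{P}^1\to T$ lifts along the universal covering $\mathbb{C}^n\to T$ to a morphism $\mathbb{P}^1\to\mathbb{C}^n$, which must be constant.

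To study a hypothetical contracted curve I would pass to a resolution. First, $T$ being smooth with $K_T\cong\mathcal{O}_T$, Grauert--Riemenschneider vanishing applied to $\varphi$ gives $R^i\varphi_*\mathcal{O}_T=R^i\varphi_*\omega_T=0$ for $i>0$, so $X$ has rational singularities. Now let $\pi\colon\widehat{X}\to X$ be a resolution of singularities. The meromorphic map $\psi:=\varphi^{-1}\circ\pi\colon\widehat{X}\dasharrow T$ is in fact a morphism, since a meromorphic map from a smooth compact complex manifold to a complex torus extends everywhere: the pullbacks of the translation-invariant holomorphic $1$-forms of $T$ extend across the (codimension $\geqslant 2$) indeterminacy locus by Hartogs, and integrating them exhibits $\widehat{X}$ as mapping to $T$. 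Thus $\psi\colon\widehat{X}\to T$ is a bimeromorphic morphism with $\varphi\circ\psi=\pi$; since its target is smooth, $\Exc(\psi)$ is covered by rational curves, and $K_{\widehat{X}}=\psi^*K_T+E=E$, where $E\geqslant 0$ is effective, $\psi$-exceptional, and $\mathrm{Supp}(E)=\Exc(\psi)$.

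The remaining --- and genuinely nontrivial --- step is to produce a rational curve. If $\Exc(\varphi)$ has a divisorial component $D\subset T$, then $D$ is $\varphi$-exceptional, so by the negativity lemma (valid for proper bimeromorphic morphisms of normal complex spaces) $D$ is not $\varphi$-nef, and there is a curve $\ell\subset D$ contracted by $\varphi$ with $D\cdot\ell<0$. By adjunction on $T$ one has $2p_a(\ell)-2=\deg\det N_{\ell/T}$ (using $\omega_T\cong\mathcal{O}_T$), and the idea is to bound $\deg\det N_{\ell/T}$ from above --- using $D\cdot\ell<0$ together with the structure of the map $D\to\varphi(D)$, and controlling the remaining normal directions via the strict transform $\widehat{\ell}\subset\widehat{X}$, where $K_{\widehat{X}}=E$ governs the normal bundle --- so as to force $p_a(\ell)=0$, that is $\ell\cong\mathbb{P}^1$, which is impossible in $T$. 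If instead $\Exc(\varphi)$ has codimension $\geqslant 2$, then $\varphi$ is a small, $K_T$-trivial bimeromorphic contraction of the smooth $T$, whose contracted curves are rational by the standard theory of flopping contractions --- again impossible. In both cases $\varphi$ contracts nothing and is therefore an isomorphism. The main obstacle is exactly the adjunction/normal-bundle estimate in the divisorial case: since $K_T$ is merely numerically trivial there is no ambient $K$-negativity available for Mori's bend-and-break, so the needed negativity must be extracted from the exceptional divisor via the negativity lemma, and one has to verify carefully that it does force $p_a(\ell)=0$. Alternatively, one can invoke the analogous statement for complex tori in \cite{Ue75}.
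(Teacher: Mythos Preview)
Your argument has a genuine gap, and you name it yourself: the adjunction/normal-bundle estimate in the divisorial case is never carried out. Knowing $D\cdot\ell<0$ controls only the summand $\deg N_{D/T}|_\ell$ of $\deg\det N_{\ell/T}$; the remaining directions $N_{\ell/D}$ are uncontrolled and can be arbitrarily positive, so there is no mechanism forcing $p_a(\ell)=0$. In the small case, your appeal to ``the standard theory of flopping contractions'' is not justified for a compact K\"ahler, merely $K_T$-trivial contraction: the usual rational-curve producers (bend-and-break, Hacon--McKernan rational chain connectedness of fibres) require projectivity or relative $K$-negativity, neither of which you have. The closing reference to \cite{Ue75} is a citation of the result rather than a proof.

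The paper's argument bypasses all of this by using the one structural feature of $T$ that your approach ignores: homogeneity. Let $E$ be an irreducible component of $\Exc(\varphi)$ of dimension $c>0$ and let $\omega$ be a K\"ahler class on $X$. Since $\varphi_*E=0$ as a $c$-cycle, the projection formula gives $(\varphi^*\omega)^c\cdot E=0$. Now pick a general translation $\tau$ of $T$: then $\tau^*E$ is no longer contracted, $\varphi_*(\tau^*E)$ is an honest $c$-cycle not contained in $\Sing(X)$, and $(\varphi^*\omega)^c\cdot\tau^*E=\omega^c\cdot\varphi_*(\tau^*E)>0$ because $\omega$ is K\"ahler. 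But translations lie in $\Aut^0(T)$ and act trivially on cohomology, so $[E]=[\tau^*E]$ and the two intersection numbers must agree. This contradiction shows $\Exc(\varphi)=\emptyset$. The whole proof is two lines of intersection theory; no rational curves, no negativity lemma, no resolution are needed.
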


\begin{proof} Let $E$ be an irreducible component of the exceptional locus of $\varphi$ of dimension $c > 0$. Consider a K\"ahler class $\omega$ on $X$. By the projection formula, $$(\varphi^*\omega)^c \cdot E = \omega^c \cdot (\varphi_*E) = 0.$$ On the other hand, we can choose a general translation $\tau \colon T \to T$ such that the image of $\tau^*(E)$ under the map $\varphi$ is not contained in the singular locus of $X$. Therefore, $$(\varphi^*\omega)^c \cdot \tau^*E = \omega^c \cdot (\varphi_*\tau^*E) > 0.$$ However, since $\tau$ is an automorphism of $T$, we have $(\varphi^*\omega)^c \cdot E = (\varphi^*\omega)^c \cdot (\tau^*E)$. This contradiction shows that the exceptional locus of $\varphi$ is empty, so $\varphi$ is an isomorphism.
\end{proof}

We also state another result by I. Mundet i Riera (see \cite[Theorem 1.10]{Mun21}). Theorem \ref{mundet1} is immediate from this result.

\begin{theorem}\label{mundet} Let $G$ be a Lie group with finitely many connected components. For every natural number $r$, the following properties are equivalent: \begin{itemize} \item the group $G$ contains subgroups of the form $(\mathbb{Z}/N\mathbb{Z})^r$ for arbitrarily large positive integers $N$; \item the group $G$ contais a subgroup isomorphic to a compact real torus $(S^1)^{r}$ of real dimension $r$. \end{itemize}
\end{theorem}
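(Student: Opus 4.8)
If $G$ contains a copy of the compact real torus $T^{r} = (\mathbb{R}/\mathbb{Z})^{r}$, then it contains the subgroup of $N$-torsion points $(\tfrac{1}{N}\mathbb{Z}/\mathbb{Z})^{r} \cong (\mathbb{Z}/N\mathbb{Z})^{r}$ for every $N$, so the content of the statement is the converse implication, and the plan is as follows. First I would reduce to the case of a compact connected group: passing to the identity component $G^{0}$, of finite index $m = [G : G^{0}]$, and intersecting the given subgroups $A_{i} \cong (\mathbb{Z}/N_{i}\mathbb{Z})^{r}$ with $G^{0}$, Lemma \ref{abelian} shows that for all large $i$ the group $A_{i} \cap G^{0}$ contains a copy of $(\mathbb{Z}/N'_{i}\mathbb{Z})^{r}$ with $N'_{i} \geqslant N_{i}/m \to \infty$. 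A connected Lie group has a maximal compact subgroup $K$, unique up to conjugacy, into which every compact subgroup can be conjugated; applying this to the finite subgroups just produced, we may assume $G = K$ is compact connected. Fix a maximal torus $T \subset K$ and put $d = \rk(K) = \dim T$.

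Everything then reduces to the following Jordan-type statement relative to a maximal torus: there is a constant $C = C(K)$ such that every finite abelian subgroup $A \subseteq K$ contains a subgroup $A' \subseteq A$ with $[A : A'] \leqslant C$ that lies in some maximal torus of $K$. Granting this, I would apply Lemma \ref{abelian} once more: for large $i$ the subgroup $(\mathbb{Z}/N'_{i}\mathbb{Z})^{r} \subseteq K$ contains a subgroup isomorphic to $(\mathbb{Z}/N''_{i}\mathbb{Z})^{r}$ with $N''_{i} \geqslant N'_{i}/C \geqslant 2$ that is contained in a maximal torus, i.e.\ in a group isomorphic to $(\mathbb{R}/\mathbb{Z})^{d}$. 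Since every finite subgroup of $(\mathbb{R}/\mathbb{Z})^{d}$ is generated by at most $d$ elements, this forces $r \leqslant d$, and then the maximal torus $T \cong (\mathbb{R}/\mathbb{Z})^{d}$ contains a subtorus isomorphic to $T^{r}$ — the required subgroup of $K \subseteq G$.

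The main obstacle is the boxed Jordan-type assertion, and I would prove it by induction on $\dim K$. If $A \subseteq Z(K)$ then, since the centralizer of a maximal torus in a connected compact group is the torus itself, $Z(K) \subseteq T$, so $A \subseteq T$ and $C = 1$ works. Otherwise choose $a \in A \setminus Z(K)$; its centralizer $H = Z_{K}(a)$ is a proper closed subgroup containing $A$ with $\dim H^{0} < \dim K$, and the number of connected components of $H$ is at most $|W_{K}|$ (every finite-order element is conjugate into $T$, and for $a \in T$ the component group of $Z_{K}(a)$ is a subquotient of the Weyl group). Hence $A_{0} = A \cap H^{0}$ has index at most $|W_{K}|$ in $A$, and by the inductive hypothesis for the connected compact group $H^{0}$ a subgroup of bounded index in $A_{0}$ lies in a maximal torus of $H^{0}$, hence in one of $K$; one may then take $C(K) = |W_{K}| \cdot \max C(H^{0})$, the maximum running over the finitely many conjugacy classes of centralizers of finite-order elements. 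Thus the real difficulty is entirely Lie-theoretic: it rests on the conjugacy of finite-order elements into a fixed maximal torus, the equality $Z_{K}(T) = T$, and the uniform bound on the number of connected components of centralizers. This is of course exactly Mundet i Riera's Theorem 1.16, whose proof can be found in \cite{Mun21}.
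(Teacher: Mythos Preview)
The paper does not prove Theorem~\ref{mundet} at all: it is stated as Mundet i Riera's result and simply cited from \cite[Theorem~1.16]{Mun21}, so there is no ``paper's own proof'' to compare against. Your closing sentence already acknowledges this.

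That said, your sketch is a reasonable outline of a proof and the overall strategy (reduce to a compact connected group via the identity component and a maximal compact subgroup, then show a Jordan-type bound relative to a maximal torus by induction on $\dim K$ using centralizers) is sound. A couple of points you are leaning on deserve to be stated more carefully if you want this to stand on its own: that for $a \in T$ the identity component $Z_K(a)^0$ contains $T$ (so its maximal tori are maximal in $K$), that the possible groups $Z_K(a)^0$ fall into finitely many conjugacy classes (they are determined by the subset of roots vanishing on $a$), and the bound $|\pi_0(Z_K(a))| \leqslant |W_K|$. None of these is deep, but they are exactly the ``Lie-theoretic'' ingredients you flag at the end, and the argument is not complete without them.
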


We can deduce the following corollary from Theorem \ref{mundet} by an argument similar to the proof of \cite[Theorem 1.9]{Mun21}.

\begin{corollary}\label{singaut} Let $X$ be a (possibly singular) normal compact K\"ahler space. For every natural number $r$, the following properties are equivalent: \begin{itemize} \item the group $\Aut(X)$ contains finite abelian subgroups of the form $(\mathbb{Z}/N\mathbb{Z})^r$ for arbitrarily large positive integers $N$; \item the group $\Aut(X)$ contais a subgroup isomorphic to $(S^1)^r$. \end{itemize}
In addition, $r \leqslant 2\dim(X)$, and if $r = 2\dim(X)$ then $X$ is biholomorphic to a compact complex torus.
\end{corollary}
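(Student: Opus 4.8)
The plan is to reduce the statement for a singular normal compact Kähler space $X$ to the smooth case treated in Theorem~\ref{mundet1}, using equivariant resolution of singularities, and then to deduce the equivalence of the two conditions from Theorem~\ref{mundet}. First I would observe that $\Aut(X)$ is a Lie group with finitely many connected components (this follows from Kähler analogues of the Bochner--Montgomery theorem, and in the singular case one uses that $\Aut(X)$ acts on cohomology of a resolution and on a suitable Hodge-theoretic datum; alternatively one can cite the relevant result for normal compact Kähler spaces). Granting this, Theorem~\ref{mundet} immediately gives the equivalence: $\Aut(X)$ contains $(\mathbb{Z}/N_i\mathbb{Z})^r$ for arbitrarily large $N_i$ if and only if $\Aut(X)$ contains a real torus $T^r$. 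So the content is entirely in the bounds on $r$ and the rigidity statement in the equality case.

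For the bound $r \leqslant 2\dim(X)$, suppose $\Aut(X)$ contains a subgroup isomorphic to $T^r$. I would take a $G$-equivariant resolution of singularities $\pi\colon \widetilde{X} \to X$ for $G = T^r$ (functorial resolution, as in \cite{BM97}, is automatically equivariant for the full connected automorphism group, since $T^r$ is connected and acts through automorphisms of $X$ compatible with the resolution). Then $\widetilde{X}$ is a smooth compact Kähler manifold of the same dimension on which $T^r$ acts faithfully by biholomorphisms — faithfulness is preserved because $\pi$ is birational and $T^r$-equivariant, so a nontrivial element acting trivially on $\widetilde{X}$ would act trivially on $X$. Now Theorem~\ref{mundet1} applied to $\widetilde{X}$ (via the subgroups $(\mathbb{Z}/N_i\mathbb{Z})^r \subset T^r \subseteq \Aut(\widetilde{X})$) yields $r \leqslant 2\dim(\widetilde{X}) = 2\dim(X)$.

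For the equality case $r = 2\dim(X)$, the same application of Theorem~\ref{mundet1} to $\widetilde{X}$ shows that $\widetilde{X}$ is biholomorphic to a compact complex torus $T$. Thus we obtain a bimeromorphic morphism $\varphi = \pi\colon T \xrightarrow{\ \sim\ } \widetilde{X} \to X$ from a compact complex torus to $X$, and Lemma~\ref{torus} forces $\varphi$ to be an isomorphism, so $X$ is biholomorphic to a compact complex torus. The main obstacle I anticipate is the input that $\Aut(X)$ is a Lie group with finitely many connected components in the singular normal Kähler setting — in the smooth case this is classical, but in the singular case it requires some care (acting on the Albanese of a resolution, or on the space of Kähler classes together with a linearization argument); once that is in hand, the rest is the equivariant-resolution reduction plus Lemma~\ref{torus}, both of which are routine. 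A secondary point to check is that equivariant resolution can be performed for the connected Lie group $T^r$ rather than just for finite groups, but functoriality of resolution handles this cleanly.
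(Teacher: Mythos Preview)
Your overall architecture matches the paper's proof: reduce the equivalence to Theorem~\ref{mundet}, obtain the bound $r\leqslant 2\dim(X)$ by passing to a $T^r$-equivariant K\"ahler resolution and invoking Theorem~\ref{mundet1}, and handle the equality case via Lemma~\ref{torus}. Those parts are fine and essentially identical to what the paper does.

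The gap is in your first step. You assert that $\Aut(X)$ is a Lie group with finitely many connected components, and you say this is classical in the smooth case. It is not: already for a smooth K3 surface one has $\Aut^0(X)$ trivial while $\Aut(X)$ can be infinite discrete, so $\Aut(X)/\Aut^0(X)$ is infinite. Thus Theorem~\ref{mundet} cannot be applied to $\Aut(X)$ directly, and the obstacle you anticipate is not one of ``care in the singular case'' but a genuinely false claim. The paper's fix is the standard one: by \cite[Lemma~3.1]{Kim18} the quotient $\Aut(X)/\Aut^0(X)$ has \emph{bounded} finite subgroups (even though it may be infinite), so each $(\mathbb{Z}/N_i\mathbb{Z})^r$ meets $\Aut^0(X)$ in a subgroup of uniformly bounded index; Lemma~\ref{abelian} then extracts subgroups of the same shape $(\mathbb{Z}/N'_i\mathbb{Z})^r$ with $N'_i\to\infty$ lying inside $\Aut^0(X)$. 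Since $\Aut^0(X)$ is a connected complex Lie group (e.g.\ \cite[p.~40]{Akh95}), Theorem~\ref{mundet} now applies. With this correction, the rest of your argument goes through verbatim.
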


\begin{proof} The group of connected components $\Aut(X)/\Aut^0(X)$ has bounded finite subgroups (see \cite[Lemma 3.1]{Kim18}). So, by Lemma \ref{abelian}, we may assume that the finite abelian subgroups in question lie in $\Aut^0(X)$. By a well-known theorem of S. Bochner and H. Montgomery, the group $\Aut^0(X)$ is a connected complex Lie group acting holomorphically on $X$ (see e. g. \cite[Theorem on p. 40]{Akh95} for a modern proof). Now the first statement of the corollary follows from Theorem \ref{mundet}. 

Suppose that there is an effective action of $(S^1)^r$ on $X$ by holomorphic automorphisms. Then by the results of \cite{BM97}, we can take a $(S^1)^r$-equivariant resolution of singularities $\varphi \colon X' \to X$, where $X'$ is a compact K\"ahler manifold. Applying Theorem \ref{mundet1} to $X'$, we get the estimate $$r \leqslant 2\dim(X') = 2\dim(X).$$ Now, if $r = 2\dim(X')$, then $X'$ is biholomorphic to a compact complex torus. By Lemma \ref{torus}, $\varphi$ is an isomorphism, and so $X$ is nonsingular and biholomorphic to a compact complex torus.
\end{proof}

The next step is to extend the above result to groups of pseudoautomorphisms of singular compact K\"ahler spaces. Recall that a bimeromorphic map $f \colon X \dasharrow X$ is a {\em pseudoautomorphism} if both $f$ and $f^{-1}$ do not contract divisors. The group of pseudoautomorphisms of $X$ is denoted by $\Psaut(X)$.

For convenience of the reader, we reproduce here the following result (see \cite[Corollary 4.6]{Gol23}). 

\begin{proposition}\label{psautreg} Let $X$ be a normal compact K\"ahler space with rational singularities. Let $f \colon X \dasharrow X$ be a pseudoautomorphism. Suppose that there exists a K\"ahler class $\omega$ such that $f_*\omega$ is also a K\"ahler class. Then $f$ is a biholomorphic automorphism of $X$.
\end{proposition}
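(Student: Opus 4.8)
The plan is to resolve the graph of $f$ and apply the negativity lemma to each of the two structure maps in turn: the two applications force a certain $f_*$-defect class on the resolution to be simultaneously effective and anti-effective, hence zero, after which the rigidity lemma shows that $f$ is a morphism. Concretely, I would first choose a resolution $W$ of the closure of the graph of $f$ in $X \times X$, with bimeromorphic morphisms $p, q \colon W \to X$ such that $f = q \circ p^{-1}$; by the results on K\"ahler models of graphs of bimeromorphic maps in \cite{Gol21} one may take $W$ to be a smooth compact K\"ahler manifold. Set $\omega' := f_*\omega = q_* \, p^*\omega$, which is a K\"ahler class by hypothesis, and put $D := p^*\omega - q^*\omega' \in H^{1,1}(W, \mathbb{R})$. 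Since $f$ is a pseudoautomorphism, $f_*$ and $(f^{-1})_*$ are mutually inverse on $H^{1,1}(X)$, so $p_*D = \omega - (f^{-1})_*\omega' = 0$ and $q_*D = f_*\omega - \omega' = 0$; hence $D$ is exceptional over $X$ for both $p$ and $q$. Because $X$ has rational singularities, the kernel of $p_*$ on $H^{1,1}(W)$ is spanned by the classes of the $p$-exceptional prime divisors (which coincide with the $q$-exceptional ones, again because $f$ is a pseudoautomorphism), so $D = \sum_i a_i E_i$ with $a_i \in \mathbb{R}$ and each $E_i$ exceptional for both $p$ and $q$; in particular $D$ is an $\mathbb{R}$-Cartier divisor, and the negativity lemma applies to it.

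Next I would invoke the negativity lemma twice. If $C \subset W$ is a curve contracted by $p$, then $-D \cdot C = q^*\omega' \cdot C = \omega' \cdot q_* C \geqslant 0$ since $\omega'$ is K\"ahler; thus $-D$ is $p$-nef, and since $D$ is $p$-exceptional the negativity lemma gives $D \geqslant 0$. Symmetrically, if $C$ is contracted by $q$, then $D \cdot C = p^*\omega \cdot C = \omega \cdot p_* C \geqslant 0$, so $D$ is $q$-nef and $q$-exceptional, whence $D \leqslant 0$. Therefore $D = 0$, i.e. $p^*\omega = q^*\omega'$.

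From $p^*\omega = q^*\omega'$ together with the fact that $\omega$ and $\omega'$ are K\"ahler, it follows that $\omega \cdot p_* C = \omega' \cdot q_* C$ for every irreducible curve $C \subset W$, and hence that $p_* C = 0$ if and only if $q_* C = 0$: the morphisms $p$ and $q$ contract exactly the same curves. Consequently every fiber of $p$ (connected, by Zariski's main theorem, as $X$ is normal) is contracted by $q$ to a point, so by the rigidity lemma $q$ factors as $q = \psi \circ p$ for some morphism $\psi \colon X \to X$; by symmetry $p = \psi' \circ q$, and therefore $\psi$ is a biholomorphic automorphism with $f = q \circ p^{-1} = \psi$.

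The step I expect to be the main obstacle --- and where the technical results of \cite{Gol21} really do the work --- is the setup in the first paragraph: producing an honestly K\"ahler resolution of the graph of $f$, and verifying on it that $D$ is represented by an $\mathbb{R}$-divisor supported on the common exceptional locus, so that the negativity lemma for proper bimeromorphic morphisms of (possibly singular) complex spaces can be applied. This is precisely where the hypotheses ``compact K\"ahler'' and ``rational singularities'' enter; once this input is granted, the remainder of the argument is formal.
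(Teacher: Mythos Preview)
The paper does not actually prove this proposition: it simply records that it was proved in \cite[Corollary 4.6]{Gol21} and moves on. So there is nothing in the present paper to compare your argument against.

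That said, your sketch is the standard route and is almost certainly how the cited reference proceeds: resolve the graph, write $D = p^*\omega - q^*\omega'$, use the pseudoautomorphism hypothesis and rational singularities to see that $D$ is an $\mathbb{R}$-divisor supported on the common exceptional locus, run the negativity lemma from both sides to get $D = 0$, and finish with the rigidity lemma. One small point of care: rather than asserting directly that $(f^{-1})_* f_* = \mathrm{id}$ on $H^{1,1}(X)$, it is cleaner to first deduce $q_*D = 0$ (immediate), use the ``rational singularities'' description of $\ker q_*$ to write $D$ as a combination of $q$-exceptional (hence $p$-exceptional) divisor classes, and conclude $p_*D = 0$ from that; this avoids any circularity. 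Your own final paragraph already identifies exactly where the technical content from \cite{Gol21} is needed, so the proposal is on target.
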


Now using this proposition we can easily generalize Corollary \ref{singaut} to the group $\Psaut(X)$.

\begin{theorem}\label{MainThm2} Let $X$ be a normal compact K\"ahler space with rational singularities. Suppose that there exists $r \in \mathbb{N}$ such that the group $\Psaut(X)$ contains finite abelian  subgroups isomorphic to $(\mathbb{Z}/N\mathbb{Z})^r$ for arbitrarily large ~$N$. Then $r \leqslant 2\dim(X)$ and $\Psaut(X)$ contains a subgroup isomorphic to a compact real torus $(S^1)^r$. In addition, if $r = 2\dim(X)$, then $X$ is biholomorphic to a compact complex torus.
\end{theorem}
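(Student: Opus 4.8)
The plan is to reduce the statement about $\Psaut(X)$ to the already-established statement about $\Aut$ of a compact K\"ahler space (Corollary~\ref{singaut}) by showing that, after passing to a bounded-index subgroup, the given finite abelian groups in $\Psaut(X)$ actually act biholomorphically on a suitable model. First I would invoke Proposition~\ref{psautreg}: it suffices to produce, for each $i$, a K\"ahler class $\omega_i$ on $X$ that is preserved (up to pushforward) by a large subgroup of $(\mathbb{Z}/N_i\mathbb{Z})^r$. The natural way to do this is to average: given a finite group $G_i \subset \Psaut(X)$, consider the action of $G_i$ on the finite-dimensional vector space $H^{1,1}_{BC}(X)$ (Bott--Chern cohomology, which carries the class of a K\"ahler form and on which pseudoautomorphisms act linearly since they do not contract divisors — this is the setup already used in \cite{Gol21}). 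The K\"ahler cone is an open convex cone in a subspace of this vector space; the barycenter of the $G_i$-orbit of a fixed K\"ahler class lies in the closure of the cone, but one must ensure it is genuinely interior.

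The subtlety is that a pseudoautomorphism need not send a K\"ahler class to a K\"ahler class, only to a class in the movable/nef-type cone, so the averaged class could land on the boundary. To handle this I would argue as in \cite[Section 2]{Xu18}: use that the subgroups $(\mathbb{Z}/N_i\mathbb{Z})^r$ have order growing to infinity, apply Lemma~\ref{abelian} to pass to subgroups of bounded index whenever needed, and use a Jordan-type or boundedness input on the linear action on cohomology. Concretely, the group $\Psaut(X)$ acts on $H^2$ preserving an integral lattice and a convex cone; the image of $\Psaut(X)$ in $\mathrm{GL}(H^2(X,\mathbb{Z}))$ has the property that its finite subgroups either are bounded or stabilize a point in the K\"ahler cone. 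The unbounded abelian subgroups $(\mathbb{Z}/N_i\mathbb{Z})^r$ therefore, after passing to bounded-index subgroups via Lemma~\ref{abelian}, have image of bounded order; hence a further bounded-index subgroup $G_i' \subset G_i$ acts trivially on $H^2(X,\mathbb{Z})$, in particular fixes the class of a K\"ahler form. By Proposition~\ref{psautreg}, each element of $G_i'$ is then a biholomorphic automorphism, so $G_i' \subset \Aut(X)$, and $G_i'$ still contains $(\mathbb{Z}/N_i'\mathbb{Z})^r$ with $N_i' \to \infty$ by Lemma~\ref{abelian}.

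At that point Corollary~\ref{singaut} applies directly: $\Aut(X)$ contains subgroups $(\mathbb{Z}/N_i'\mathbb{Z})^r$ for arbitrarily large $N_i'$, hence $\Aut(X) \supseteq T^r$, and since $\Aut(X) \subseteq \Psaut(X)$ this compact torus sits inside $\Psaut(X)$ as required. The bound $r \leqslant 2\dim(X)$ and the equality case (forcing $X$ to be a compact complex torus) are then inherited verbatim from Corollary~\ref{singaut}. I would conclude by remarking that in the equality case $X$ is nonsingular, so there is no gap between $\Psaut$ and $\Aut$ there.

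The main obstacle I anticipate is the linear-algebra/boundedness step in the second paragraph: showing that an unbounded sequence of finite abelian subgroups of $\Psaut(X)$ must act with bounded image on $H^2(X,\mathbb{Z})$. One needs that $\Psaut(X)$ acts on $H^2$ through a group whose finite subgroups not fixing a K\"ahler class are uniformly bounded — this should follow from the existence of an invariant lattice together with the fact that an infinite-order element or large finite group acting nontrivially on the K\"ahler cone would move a rational ray too far. If a clean statement of this is not available off the shelf, the fallback is to work directly with the averaged class and a compactness argument showing it stays in the open cone after shrinking the group; either way the remaining steps are formal consequences of the results already quoted.
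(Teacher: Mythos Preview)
Your approach is essentially the same as the paper's: pass to the subgroup of $\Psaut(X)$ acting trivially on $(1,1)$-cohomology, observe via Proposition~\ref{psautreg} that this subgroup lies in $\Aut(X)$, and then invoke Corollary~\ref{singaut}. The boundedness step you flag as the main obstacle is handled in the paper simply by Minkowski's theorem: the quotient $\Psaut(X)/\Psaut(X)_\tau$ embeds in $\mathrm{GL}(H^{1,1}(X,\mathbb{Q}))$, whose finite subgroups have order bounded in terms of $h^{1,1}$ alone, so no averaging argument or cone analysis is needed---Lemma~\ref{abelian} then immediately gives subgroups $(\mathbb{Z}/N'\mathbb{Z})^r$ inside $\Psaut(X)_\tau \subset \Aut(X)$ with $N' \to \infty$.
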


\begin{proof} As in the proof of \cite[Theorem 4.5]{Gol23}, we consider the action of $\Psaut(X)$ on $H^{2}(X, \mathbb{Q})$ by pushforward. We have an exact sequence of groups $$1 \to \Psaut(X)_{\tau} \to \Psaut(X) \to \Psaut(X)/\Psaut(X)_{\tau} \to 1,$$ where we set $$\Psaut(X)_{\tau} = \{f \in \Psaut(X) \mid f_*|_{H^{2}(X, \mathbb{Q})} = \mathrm{Id}\}.$$ Note that the quotient group $\Psaut(X)/\Psaut(X)_{\tau}$ embeds into $\mathrm{GL}(H^{2}(X, \mathbb{Q}))$, therefore, by Minkowski's theorem (see e.g. \cite[Theorem 1]{Ser07}) the orders of finite subgroups of $\Psaut(X)/\Psaut(X)_{\tau}$ are bounded by a constant $M(X)$ depending on $h^{2}(X, \mathbb{Q})$ only. Hence the group $\Psaut(X)_{\tau}$ contains a sequence of finite abelian subgroups of asymptotic rank $r$; in addition, by Lemma \ref{abelian}, we may assume that these subgroups are of the form $(\mathbb{Z}/N_i\mathbb{Z})^r$, where $N_i$ tend to infinity. The group $\Psaut(X)_{\tau}$ acts trivially on $H^{2}(X, \mathbb{R}) = H^{2}(X, \mathbb{Q}) \otimes_{\mathbb{Q}} \mathbb{R}$ and, in particular, it preserves every K\"ahler class on $X$. Thus by Proposition \ref{psautreg}, the group $\Psaut(X)_{\tau}$ is contained in $\Aut(X)$. The theorem now follows from Corollary \ref{singaut}.
\end{proof}

\subsection{Non-uniruled varieties and complex spaces} In this subsection we use Theorem \ref{MainThm2} to derive a slightly more general version of Theorem ~\ref{jxu1} from the Introduction.  

To define minimal and quasi-minimal models of compact K\"ahler spaces, we need to introduce notions of nefness and modified nefness in the non-projective context (see \cite{Bou04, HP16} for more details).

\begin{definition}\label{Nef} Let $X$ be a normal compact K\"ahler space with rational singularities. We say that a class $\alpha \in H^{1,1}(X, \mathbb{R})$ is \begin{itemize} \item {\em nef} if it belongs to the closure of the cone of K\"ahler classes;
\item {\em modified nef} if it belongs to the closure of the cone generated by classes of the form $\mu_*\omega$ where $\mu \colon Y \to X$ is an arbitrary bimeromorphic morphism from a smooth compact K\"ahler manifold $Y$ and $\omega$ is a K\"ahler class on $Y$.
\end{itemize}
\end{definition}

\begin{definition} \label{qmin} A compact K\"ahler space (or a projective variety) $X$ with terminal $\mathbb{Q}$-factorial singularities is called \begin{itemize} \item {\em minimal} (or a {\em minimal model}) if the canonical class $K_X$ is nef; \item {\em quasi-minimal} (or a {\em quasi-minimal model}) if $K_X$ is modified nef. \end{itemize}  
\end{definition}

Note that a minimal model is also quasi-minimal. Existence of quasi-minimal models for non-uniruled projective varieties was shown in \cite[Lemma ~4.4]{PS14}.

\begin{proposition} \label{qmin} Let $X$ be a non-uniruled projective variety. Then there exists a quasi-minimal model of $X$, that is, a quasi-minimal variety $X'$ birational to $X$.
\end{proposition}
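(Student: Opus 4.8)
The plan is to follow the argument of \cite[Lemma~4.4]{PS14}, combining the Boucksom--Demailly--Paun--Peternell characterisation of uniruledness with the minimal model program. First I would reduce to a smooth model: choosing a resolution of singularities $X_0\to X$, the variety $X_0$ is smooth, hence terminal and $\mathbb{Q}$-factorial, and birational to $X$. Since $X$, and therefore $X_0$, is not uniruled, the theorem of Boucksom--Demailly--Paun--Peternell shows that $K_{X_0}$ is pseudoeffective. Write the divisorial Zariski decomposition $K_{X_0}=P_\sigma(K_{X_0})+N_\sigma(K_{X_0})$, with negative part $N_\sigma(K_{X_0})\geq 0$; then $K_{X_0}$ is modified nef precisely when $N_\sigma(K_{X_0})=0$, i.e.\ precisely when $X_0$ is already quasi-minimal. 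So the task is to pass birationally to a terminal $\mathbb{Q}$-factorial model whose canonical class has trivial negative part.

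Next I would run the $K_{X_0}$-minimal model program (with scaling of a fixed ample divisor). Every step is either a divisorial contraction, which strictly decreases the Picard number, or a flip, which is an isomorphism in codimension one; both preserve terminality, $\mathbb{Q}$-factoriality and the birational class. Because $K_{X_0}$ is pseudoeffective the program cannot terminate with a Mori fibre space: the birational contraction to such a model keeps the canonical class pseudoeffective, whereas on a Mori fibre space the canonical class restricts to an anti-ample class on a general (hence moving) fibre, which is impossible. If the program terminates it thus reaches a minimal model, which is in particular quasi-minimal. The point of the proposition is that one can reach a quasi-minimal model \emph{without} knowing that the full program terminates, which is feasible because modified nefness of the canonical class is strictly weaker than nefness.

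The step I expect to be the main obstacle is precisely this last point, since termination of flips is unknown in general. The key is that the obstruction to quasi-minimality --- the negative part $N_\sigma(K)$ --- is a genuinely divisorial and rigid datum: only finitely many divisorial contractions can occur in the program, and Nakayama's analysis of the $\sigma$-decomposition (on which \cite{PS14} relies) guarantees that a pseudoeffective canonical class becomes modified nef on a suitable birational model obtained by contracting this divisorial part. I would therefore carry out the construction by appealing to those results on the $\sigma$-decomposition for the existence of the quasi-minimal model, and then check the routine points --- that terminality, $\mathbb{Q}$-factoriality and the birational equivalence with $X$ are preserved --- directly.
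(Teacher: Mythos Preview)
The paper does not prove this proposition at all; it simply records that the result was shown in \cite[Lemma~4.4]{PS14}. Your proposal cites the very same reference and then sketches its argument (resolution, BDPP to get $K$ pseudoeffective, Nakayama's $\sigma$-decomposition, and the observation that only the finitely many divisorial steps matter for killing $N_\sigma(K)$), so you are taking exactly the approach the paper defers to --- indeed supplying more detail than the paper itself.
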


In the case of non-uniruled compact K\"ahler spaces of dimension 3, minimal models exist by \cite[Theorem 1.1]{HP16}.

\begin{theorem}\label{hp} Let $X$ be a compact K\"ahler space of dimension 3. Then there exists a minimal compact K\"ahler space $X'$ bimeromorphic to $X$.
\end{theorem}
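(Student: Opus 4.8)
This is the minimal model program (MMP) for compact K\"ahler threefolds, due to H\"oring and Peternell \cite{HP16}. As in the sentence preceding the statement, it is understood that $X$ is non-uniruled — equivalently, that $K_X$ is pseudo-effective — since a uniruled variety admits no minimal model; this is the case I would prove. The plan is to run the MMP. \textbf{First} I would reduce to a $\mathbb{Q}$-factorial terminal model: resolve $X$ to a smooth compact K\"ahler threefold (a composite of blow-ups along smooth centres, which preserves the K\"ahler property) and then run a terminalization, obtaining a $\mathbb{Q}$-factorial compact K\"ahler threefold $X_0$ with terminal singularities, bimeromorphic to $X$ and with $K_{X_0}$ pseudo-effective; it then suffices to run the MMP starting from $X_0$.

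\textbf{The core} is a cone-and-contraction theorem for a $\mathbb{Q}$-factorial terminal compact K\"ahler threefold $Y$ with $K_Y$ not nef: there is a $K_Y$-negative extremal ray, generated by a rational curve, that can be contracted by a bimeromorphic morphism (or a fibration) $c\colon Y\to Y'$ onto a normal compact K\"ahler space. Since the K\"ahler category has no ample divisors, one argues inside $H^{1,1}(Y,\mathbb{R})$: starting from the K\"ahler cone and its boundary, one produces a nef class lying on a $K_Y$-negative extremal face whose null locus is contractible. The geometric inputs are the existence of rational curves on $Y$ spanning the ray (bend-and-break applied to the projective subvarieties of $Y$ that arise, or the structure of the non-nef locus) and the existence of contraction morphisms for non-projective K\"ahler spaces (Grauert/Artin-type contractibility criteria together with analytic results on degenerations of K\"ahler classes).

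\textbf{Next}, since $\dim Y=3$ and $Y$ is terminal, $c$ is either a Mori fibration — which cannot occur because $K_Y$ is pseudo-effective — or a divisorial contraction, in which case $Y'$ is again $\mathbb{Q}$-factorial terminal compact K\"ahler with strictly smaller Picard number and one continues, or a small (flipping) contraction, in which case one constructs the flip $Y\dashrightarrow Y^+$ by transporting Mori's local construction of threefold flips over the base of $c$ (which is projective, being a point or a curve) and checking that $Y^+$ is again $\mathbb{Q}$-factorial, terminal and compact K\"ahler, and then continues from $Y^+$. For termination, divisorial contractions strictly decrease the Picard number and so occur only finitely often, while threefold flips terminate because Shokurov's difficulty is a finite invariant, strictly decreasing under flips, that depends only on the discrepancies of divisors over $Y$ and is therefore insensitive to whether $Y$ is projective. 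Hence the program stops after finitely many steps at a model $X'$ with $K_{X'}$ nef, i.e.\ at a minimal model bimeromorphic to $X$.

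\textbf{The hard part} is the cone-and-contraction step: replacing the projective cone theorem and base-point-free theorem by K\"ahler analogues — the existence of rational curves and of extremal contractions on non-projective K\"ahler threefolds — requires substantial analytic machinery (Demailly--P\u{a}un-type results on the K\"ahler cone, and the H\"oring--Peternell techniques developed in the companion papers) and is exactly what \cite{HP16} accomplishes; the reduction, trichotomy and termination steps, by contrast, are essentially the same as in the projective MMP.
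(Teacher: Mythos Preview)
The paper does not prove this theorem at all: it is stated purely as a citation of \cite[Theorem 1.1]{HP16}, with no argument given. Your proposal is therefore not really comparable to the paper's ``proof''; it is instead a sketch of the H\"oring--Peternell argument that the paper is quoting. As such a sketch it is broadly accurate: reduce to a $\mathbb{Q}$-factorial terminal model, establish a K\"ahler cone-and-contraction theorem via analytic methods on $H^{1,1}$, handle divisorial contractions and flips, and terminate via Shokurov difficulty. You also correctly flag that the non-uniruled hypothesis is implicit from the sentence preceding the theorem.

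One genuine slip: for a small (flipping) contraction $c\colon Y\to Y'$ in dimension three, the base $Y'$ is \emph{not} ``a point or a curve'' and is certainly not projective in general; $c$ is bimeromorphic, so $\dim Y' = 3$, and only the exceptional locus is a curve. The correct point is that existence of the flip is an analytically local question over $Y'$, and terminal threefold flips exist locally by Mori's classification (transported to the analytic category), so the non-projectivity of $Y'$ is irrelevant. With that correction your outline matches the strategy of \cite{HP16}, which is all the paper is invoking.
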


The reason to consider quasi-minimal models is the following description of their bimeromorphic (or birational) automorphisms. The case when $X$ is a projective variety was settled in \cite[Corollary 4.7]{PS14}; for the general case of compact K\"ahler spaces see \cite[Proposition 4.2]{Gol23}.

\begin{proposition}\label{psaut} Let $X$ be a quasi-minimal compact K\"ahler space. Let $f \colon X \dasharrow X$ be a bimeromorphic map. Then $f$ is a pseudoautomorphism.
\end{proposition}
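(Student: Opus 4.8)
The plan is to run the standard ``negativity lemma'' argument showing that a bimeromorphic self-map of a (quasi-)minimal model cannot contract a divisor.

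First I would pass to a common resolution: resolving the closure of the graph of $f$ produces a smooth compact K\"ahler manifold $W$ together with proper bimeromorphic morphisms $p, q \colon W \to X$ such that $f = q \circ p^{-1}$. Since $X$ is $\QQ$-factorial with terminal singularities, $K_X$ is $\QQ$-Cartier and one has discrepancy formulas
\[ K_W = p^* K_X + \sum_E a_E E, \qquad K_W = q^* K_X + \sum_F b_F F, \]
where $E$ (resp.\ $F$) ranges over the $p$-exceptional (resp.\ $q$-exceptional) prime divisors of $W$ and all $a_E, b_F > 0$ by terminality. The point to record is that $f$ is a pseudoautomorphism if and only if the set of $p$-exceptional prime divisors of $W$ coincides with the set of $q$-exceptional ones: a prime divisor $D \subset X$ is contracted by $f$ exactly when its $p$-strict transform in $W$ is $q$-exceptional but not $p$-exceptional, and symmetrically for $f^{-1}$.

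So it suffices to compare the two exceptional sets. Subtracting the discrepancy formulas gives $B := p^* K_X - q^* K_X = \sum_F b_F F - \sum_E a_E E$. I would then check that $-B$ is $p$-nef, i.e.\ $B \cdot C \leqslant 0$ for every curve $C$ contracted by $p$: indeed $B \cdot C = -\,q^* K_X \cdot C \leqslant 0$ because $K_X$ is nef (in the minimal case), or---in the quasi-minimal case---after replacing $p^* K_X$ and $q^* K_X$ by the positive parts of their divisorial Zariski decompositions, whose negative parts are exceptional precisely because $K_X$ is modified nef, and possibly after enlarging $W$ to a modification on which $K_X$ pulls back to a nef class. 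Moreover $p_* B = \sum_{F\ \text{not}\ p\text{-exc.}} b_F\, p(F)$ is effective. The negativity lemma---in the form valid for proper bimeromorphic morphisms of normal compact complex spaces, which is the input borrowed from \cite{PS14} and \cite{Gol21}---then forces $B \geqslant 0$, so every $p$-exceptional prime divisor occurs in $\sum_F b_F F$ and is therefore $q$-exceptional. Exchanging the roles of $p$ and $q$ gives the reverse inclusion, hence the two exceptional sets agree and $f$ is a pseudoautomorphism.

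The main obstacle is not the formal manipulation above but its two analytic inputs. The first is the relative negativity lemma over a compact K\"ahler base $X$ whose exceptional fibres and subvarieties need not be projective. The second is the passage from ``$K_X$ nef'' to ``$K_X$ modified nef'', which requires working with divisorial Zariski decompositions (or with a modification computing them) and verifying that the negative parts are $p$- resp.\ $q$-exceptional; this is exactly the content of the definition of a quasi-minimal model and of \cite[Corollary 4.7]{PS14} and \cite[Proposition 4.2]{Gol21}. (For compact K\"ahler threefolds one may instead invoke Theorem \ref{hp} to work with an honest minimal model, in which case the nef version of the argument applies verbatim.) With these facts in hand the argument is routine.
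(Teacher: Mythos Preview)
The paper does not prove this proposition: it simply records that the projective case is \cite[Corollary~4.7]{PS14} and the general K\"ahler case is \cite[Proposition~4.2]{Gol21}. Your proposal reconstructs the argument underlying those references --- the standard comparison of the $p$- and $q$-exceptional loci on a common resolution via discrepancies and the negativity lemma --- and in the minimal ($K_X$ nef) case your outline is correct. In that sense you have supplied more than the paper does, and by the same method the cited sources use.

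The one soft spot is the genuinely quasi-minimal case. Your remark that the negative parts of the divisorial Zariski decompositions of $p^{*}K_X$ and $q^{*}K_X$ are exceptional (because $K_X$ is modified nef) is correct and is indeed the mechanism used in the cited references. However, the alternative suggestion to ``enlarge $W$ to a modification on which $K_X$ pulls back to a nef class'' cannot work as stated: a class that is only modified nef on $X$ does not become nef after further blow-ups. Since you explicitly flag this step as the delicate input and defer to precisely the references the paper itself invokes, this is not a genuine gap --- just an imprecise alternative gloss on a point that the paper does not attempt to explain either.
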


Now we can prove Theorem \ref{Main} for a non-uniruled projective variety $X$ over an algebraically closed field $k$ of zero characteristic. Without loss of generality we may assume that $k = \mathbb{C}$. 

\begin{theorem}\label{NonUni} Let $X$ be a non-uniruled projective variety over the field of complex numbers. Suppose that there exists $r \in \mathbb{N}$ such that the group $\Bir(X)$ contains finite abelian subgroups isomorphic to $(\mathbb{Z}/N\mathbb{Z})^r$ for arbitrarily large positive integers $N$. Then $$r \leqslant 2\dim(X),$$ and the group $\Bir(X)$ contains a subgroup isomorphic to an abelian variety of dimension ~$\lceil r/2\rceil$. In the case $r = 2\dim(X)$ the variety $X$ is birational to an abelian variety.
\end{theorem}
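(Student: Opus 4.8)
The plan is to reduce the statement about $\Bir(X)$ for a non-uniruled projective variety $X$ to the statement about $\Psaut(X')$ for a quasi-minimal model $X'$, and then invoke Theorem \ref{MainThm2}. First I would apply Proposition \ref{qmin} to produce a quasi-minimal model $X'$ birational to $X$; since $\Bir(X') \simeq \Bir(X)$, the group $\Bir(X')$ contains subgroups isomorphic to $(\mathbb{Z}/N_i\mathbb{Z})^r$ with $N_i \to \infty$. By Proposition \ref{psaut}, every bimeromorphic selfmap of the quasi-minimal space $X'$ is a pseudoautomorphism, so in fact these subgroups lie in $\Psaut(X')$. A quasi-minimal model is terminal and $\mathbb{Q}$-factorial, hence has rational singularities, so Theorem \ref{MainThm2} applies: we get $r \leqslant 2\dim(X') = 2\dim(X)$, and $\Psaut(X')$ contains a subgroup isomorphic to a compact complex torus $T$ of real dimension $r$.

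The next step is to upgrade ``compact complex torus'' to ``abelian variety of dimension $\lceil r/2 \rceil$'' inside $\Bir(X)$. Here I would note that the connected compact group $T \subset \Psaut(X') \subset \Bir(X')$ acts by pseudoautomorphisms; passing to a $T$-equivariant resolution (using \cite{BM97}) and then running a $T$-equivariant minimal model program — or, more directly, using that the $T$-action regularizes on a suitable birational model — one obtains a smooth projective variety on which $T$ acts biholomorphically and faithfully. A faithful holomorphic action of a compact complex torus $T$ of dimension $\lceil r/2 \rceil$ on a projective variety $Y$ has image lying in $\Aut^0(Y)$, which is a linear algebraic group extension of an abelian variety; since $T$ is a torus with no nonconstant holomorphic functions, its image is contained in the abelian-variety part, hence is itself an abelian variety (a quotient of $T$, but faithfulness forces it to be all of $T$). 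This realizes an abelian variety of dimension $\lceil r/2 \rceil$ inside $\Bir(X')\simeq\Bir(X)$.

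For the equality case $r = 2\dim(X)$: Theorem \ref{MainThm2} gives that $X'$ itself is biholomorphic to a compact complex torus, in particular $X'$ is smooth, and being a projective variety it is an abelian variety. Since $X$ is birational to $X'$, we conclude $X$ is birational to an abelian variety, which is the claim.

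The main obstacle I expect is the passage from the abstract conclusion ``$\Psaut(X')$ contains a compact complex torus $T$'' to ``$\Bir(X)$ contains an \emph{abelian variety}'': one must be careful that regularizing the $T$-action does not shrink $T$ and that the model on which $T$ acts regularly can be taken projective (so that $\Aut^0$ is algebraic and its structure theorem applies). A clean way around this is to observe that $T$, being a compact group of pseudoautomorphisms preserving a K\"ahler class (since $T$ is connected it acts trivially on $H^{1,1}$), already lies in $\Aut(X')$ by Proposition \ref{psautreg}, and then to take a $T$-equivariant resolution which can be chosen projective because $X'$ is; on this smooth projective model the structure of $\Aut^0$ finishes the argument. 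The bookkeeping with $\lceil r/2\rceil$ versus the real dimension $r$ of $T$ is routine once $T$ is identified as a complex torus.
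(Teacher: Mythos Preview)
Your proposal is correct and follows essentially the same route as the paper: pass to a quasi-minimal model $X'$ via Proposition~\ref{qmin}, use Proposition~\ref{psaut} to identify $\Bir(X)\simeq\Bir(X')=\Psaut(X')$, and then invoke Theorem~\ref{MainThm2}. The paper's proof stops there with ``the result follows from Theorem~\ref{MainThm2}''; your additional paragraph justifying the passage from ``compact complex torus'' to ``abelian variety of dimension $\lceil r/2\rceil$'' (via $\Aut^0$ of a projective equivariant resolution) fills in a detail the paper leaves implicit, but this is elaboration rather than a different approach.
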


\begin{proof} Since $X$ is non-uniruled, by Proposition \ref{qmin} there exists a quasi-minimal projective variety $X'$ birational to ~$X$. By Proposition \ref{psaut}, we have $\Bir(X) \simeq \Bir(X') = \Psaut(X')$. The upper bound $r \leqslant 2\dim(X)$ now follows from Theorem \ref{MainThm2}. Since $X$ is not covered by rational curves, the compact real torus $(S^1)^r$ in the connected component $\Aut^0(X)$ can only be contained in an abelian variety of complex dimension at least $\lceil r/2\rceil$.
\end{proof}

An analogous result holds for a compact K\"ahler space $X$, under the assumption that a quasi-minimal model of $X$ exists. By Theorem \ref{hp} this condition holds if $\dim(X) \leqslant 3$.

\begin{proposition}\label{NonUniKah} Let $X$ be a non-uniruled compact K\"ahler space admitting a quasi-minimal model. Let, for some $r \in \mathbb{N}$, the group $\Bim(X)$ contain finite abelian subgroups isomorphic to $(\mathbb{Z}/N\mathbb{Z})^r$ for arbitrarily large $N$. Then $r \leqslant 2\dim(X)$, and the group $\Bim(X)$ contains a subgroup isomorphic to a compact complex torus of dimension ~$\lceil r/2\rceil$. In addition, if $r = 2\dim(X)$, then $X$ is bimeromorphic to a compact complex torus.
\end{proposition}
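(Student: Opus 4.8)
The plan is to run the proof of Theorem~\ref{NonUni} with the K\"ahler minimal model theorem in place of Proposition~\ref{qmin}. In the projective setting one passes to a quasi-minimal model because minimal models are not known to exist in arbitrary dimension; in dimension $3$ the K\"ahler minimal model program supplies an honest minimal model (Theorem~\ref{hp}), and this is enough because Proposition~\ref{psaut} already applies to quasi-minimal (in particular, minimal) compact K\"ahler spaces.

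Since $X$ is a non-uniruled compact K\"ahler space of dimension $3$, Theorem~\ref{hp} produces a minimal compact K\"ahler space $X'$ bimeromorphic to $X$. By definition $X'$ has terminal $\mathbb{Q}$-factorial singularities; in particular these singularities are rational. A minimal model is quasi-minimal, so by Proposition~\ref{psaut} every bimeromorphic selfmap of $X'$ is a pseudoautomorphism. Fixing a bimeromorphic map $X \dasharrow X'$ and conjugating by it, we obtain group isomorphisms
$$\Bim(X) \simeq \Bim(X') = \Psaut(X'),$$
under which the hypothesized subgroups isomorphic to $(\mathbb{Z}/N_i\mathbb{Z})^r$ are carried into $\Psaut(X')$.

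Now $X'$ is a compact K\"ahler space with rational singularities whose group of pseudoautomorphisms contains $(\mathbb{Z}/N_i\mathbb{Z})^r$ for arbitrarily large $N_i$, so Theorem~\ref{MainThm2} applies. It yields $r \leqslant 2\dim(X') = 6$, a subgroup of $\Psaut(X') \simeq \Bim(X)$ isomorphic to a compact complex torus of dimension $\lceil r/2\rceil$, and, in the equality case $r = 6$, the conclusion that $X'$ is biholomorphic to a compact complex torus; since $X$ is bimeromorphic to $X'$, it is then bimeromorphic to a compact complex torus as well.

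I do not expect a genuine obstacle here: the argument is a formal consequence of Theorem~\ref{hp}, Proposition~\ref{psaut}, and Theorem~\ref{MainThm2}. The only two points deserving a word are that minimal K\"ahler threefolds have rational singularities---immediate, since terminal singularities are rational---and that the identification $\Bim(X') = \Psaut(X')$ together with a fixed bimeromorphic map $X \dasharrow X'$ transports the finite abelian subgroups, which is the same harmless bookkeeping as in the proof of Theorem~\ref{NonUni}.
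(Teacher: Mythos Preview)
Your proposal is correct and follows essentially the same route as the paper: pass to a minimal K\"ahler threefold via Theorem~\ref{hp}, identify $\Bim(X)\simeq\Bim(X')=\Psaut(X')$ using Proposition~\ref{psaut}, and then invoke Theorem~\ref{MainThm2}. The extra remarks you make (terminal $\Rightarrow$ rational singularities, and the transport of subgroups under conjugation) are exactly the points the paper leaves implicit.
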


\begin{proof} By the assumption, there exists a quasi-minimal compact K\"ahler space $X'$ bimeromorphic to $X$. By Proposition \ref{psaut}, $\Bim(X) \simeq \Bim(X') = \Psaut(X')$. Now the required result is secured by Theorem ~\ref{MainThm2}.
\end{proof}

\subsection{Rationally connected varieties}

We recall an important result on boundedness for finite groups acting on rationally connected algebraic varieties.

\begin{proposition}\label{fixpt} Let $X$ be a rationally connected algebraic variety of dimension $n$ over an algebraically closed field $k$ of zero characteristic. Then there exists a constant $J(n)$, depending on $n$ only, such that for any finite subgroup $G \subseteq \Aut(X)$ there exists a subgroup $H \subseteq G$ of index at most $J(n)$ acting on $X$ with a fixed point.
\end{proposition}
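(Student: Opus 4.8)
The plan is to reduce to a situation where one can apply a boundedness result for finite groups acting on Fano-type varieties. First I would invoke Proposition \ref{reg} to replace $X$ by a smooth projective model $\overline{X}$ with a regular $G$-action; since $X$ is rationally connected, so is $\overline{X}$, and a fixed point for the $G$-action on one model does not automatically give a fixed point on the other, so I will actually want to run a minimal model program instead. The key idea is to run a $G$-equivariant MMP on $\overline{X}$ (which terminates because $\overline{X}$ is rationally connected, hence uniruled, hence not a minimal model): the output is a $G$-Mori fiber space $Y \to Z$. If $\dim Z > 0$ one passes to the base, which is again rationally connected of smaller dimension, and argues by induction on $n$; so the real case is $\dim Z = 0$, i.e. $Y$ is a $G$-Fano variety (a Fano variety with terminal $\mathbb{Q}$-factorial singularities on which $G$ acts, with $\rho^G(Y) = 1$).

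The heart of the argument is then the theorem of Prokhorov and Shramov on boundedness of finite subgroups of $\Aut$ of (conjecturally, and now unconditionally in fixed dimension by Birkar's work) Fano varieties: there is a constant $J_0(n)$ such that any finite subgroup of $\Aut(Y)$ for $Y$ a Fano variety of dimension $n$ with terminal singularities has a normal abelian subgroup of index at most $J_0(n)$ (bounded Jordan property). But I need more than Jordan — I need a fixed point. For that I would use the following: a finite abelian group $A$ acting on a $\mathbb{Q}$-factorial terminal Fano variety $Y$ of Picard rank $1$ has a fixed point. Indeed, the $A$-action on a $\mathbb{Q}$-Cartier multiple of $-K_Y$ and on the associated sections gives an $A$-equivariant embedding $Y \hookrightarrow \mathbb{P}(V)$ for an $A$-representation $V$; choosing a one-dimensional $A$-subrepresentation $L \subset V$ gives a fixed point of $A$ on $\mathbb{P}(V)$, but this point need not lie on $Y$. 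The standard fix is a topological/cohomological fixed-point count (a holomorphic Lefschetz-type argument, or the fact that a solvable group acting on a rationally connected variety over an algebraically closed field has a fixed point by a theorem going back to Rosenlicht–Borel in the linear case and extended by Prokhorov–Shramov, using that such varieties have vanishing higher coherent cohomology of $\mathcal{O}$): a connected solvable — in particular a finite abelian — group acting on a rationally connected variety need not fix a point in general, so one must be careful, and this is where I expect the main obstacle.

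The cleanest route, and the one I would ultimately take, combines the two ideas: apply the Prokhorov–Shramov bounded Jordan property to get $A \trianglelefteq G$ abelian of index $\leqslant J_0(n)$ acting on the $G$-Mori fiber space, then use that over an algebraically closed field of characteristic zero a finite abelian group acting on a rationally connected variety $Y$ with at worst terminal (hence rational, hence Cohen–Macaulay with rational singularities) singularities and $-K_Y$ ample has a fixed point, via the equivariant Atiyah–Bott / holomorphic Lefschetz fixed point formula applied cyclically to generators together with the vanishing $H^i(Y,\mathcal{O}_Y)=0$ for $i>0$ — the holomorphic Euler characteristic of $\mathcal{O}_Y$ is $1$, which forces a nonempty fixed locus for each cyclic factor, and a commuting-group refinement (stratifying by fixed loci, each of which is again rationally connected with rational singularities) yields a common fixed point. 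Pulling this fixed point back through the equivariant birational map and the MMP steps — which contract or flip $G$-invariant loci — requires knowing the fixed point can be chosen outside the indeterminacy locus, which one arranges by a further equivariant resolution; the induction on $\dim Z$ for the fiber-space case then completes the proof, with $J(n)$ built recursively from $J_0(n)$ and the lower-dimensional constants. The main obstacle, as indicated, is establishing the fixed-point statement for a finite abelian group on a $G$-Fano variety in a way that is robust under the singularities produced by the MMP; I would isolate this as a separate lemma and prove it by the stratification-plus-Lefschetz argument above.
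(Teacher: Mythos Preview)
The paper's own proof is a two--line citation: \cite[Theorem~4.2]{PS16} is precisely this fixed-point statement, proved there conditionally on the BAB conjecture, and \cite[Theorem~1.1]{Bir21} supplies BAB. What you have written is an attempted reconstruction of the Prokhorov--Shramov argument, and your global outline---regularize, run a $G$-equivariant MMP, induct on the base of the resulting Mori fibre space, and in the base case pass via Jordan's theorem to an abelian subgroup $A\subset G$ of bounded index acting on a $G$-Fano $Y$---does match theirs.

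The genuine gap is exactly where you flag it. Your mechanism for producing a fixed point of the abelian group $A$ on $Y$ is to apply holomorphic Lefschetz to one generator at a time and iterate along the chain of fixed loci, asserting that each such fixed locus ``is again rationally connected with rational singularities.'' That assertion is false. On the Fermat cubic surface $S=\{x_0^3+x_1^3+x_2^3+x_3^3=0\}$ the order-$3$ automorphism $\sigma_1\colon x_0\mapsto\zeta_3 x_0$ has fixed locus the smooth plane cubic $S\cap\{x_0=0\}$, an elliptic curve; worse, pairing $\sigma_1$ with $\sigma_2\colon (x_2,x_3)\mapsto(\zeta_3 x_2,\zeta_3^2 x_3)$ gives a $(\mathbb{Z}/3\mathbb{Z})^2$ acting on $S$ with \emph{no} common fixed point at all. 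So the sharper claim ``a finite abelian $A$ on a terminal Fano $Y$ has a fixed point'' is already false, and the stratification-plus-Lefschetz iteration you propose cannot repair it.

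What \cite{PS16} actually uses to close this is BAB once more, not Lefschetz: boundedness of the family of $G$-Fanos in dimension $n$ yields a uniform anticanonical embedding $Y\hookrightarrow\mathbb{P}^N$ of bounded degree with $N$ bounded, and a fixed point for a further bounded-index subgroup of $A$ is then extracted by a direct geometric argument in $\mathbb{P}^N$ exploiting that degree bound. In your write-up BAB enters only through the Jordan constant $J_0(n)$; the missing idea is that boundedness also controls the projective geometry of $Y$ enough to force fixed points after a second bounded-index reduction.
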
 

This result is immediate from \cite[Theorem 4.2]{PS16} and \cite[Theorem 1.1]{Bir21}. 

As a result, we have the following upper bound for ranks of finite abelian subgroups in the group $\Bir(X)$, where $X$ is a geometrically rationally connected algebraic variety over any field of zero characteristic.

\begin{corollary} \label{constant1} Let $X$ be a geometrically integral and geometrically rationally connected algebraic variety of dimension $n$ over an arbitrary field $k$ of zero characteristic. There exists a constant $M = M(n)$ such that, for any finite subgroup $G \subset \Bir(X)$, there exists an abelian subgroup $H \subset G$ of index at most $M(n)$ and such the rank of $H$ does not exceed $n$.
\end{corollary}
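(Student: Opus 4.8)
The plan is to combine Proposition \ref{fixpt} with Proposition \ref{faith}, passing through a regularization of the birational action. The target is: given a finite $G \subset \Bir(X)$ with $X$ rationally connected of dimension $n$, find an abelian $H \subseteq G$ of bounded index that is generated by at most $n$ elements. The key point is that a finite group acting on a smooth $n$-dimensional variety with a fixed point embeds into $\mathrm{GL}_n(k)$ via the tangent space action, and abelian subgroups of $\mathrm{GL}_n$ are diagonalizable (over the algebraically closed field $k$), hence generated by $n$ elements.

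First I would apply Proposition \ref{reg} to replace $X$ by a smooth projective model $\overline{X}$ carrying a regular $G$-action and a $G$-equivariant birational map $\overline{X} \dasharrow X$; this does not change $\dim X = n$ and identifies $G$ with a finite subgroup of $\Aut(\overline{X})$. Since $\overline{X}$ is still rationally connected, Proposition \ref{fixpt} gives a subgroup $G_1 \subseteq G$ of index at most $J(n)$ which acts on $\overline{X}$ with a fixed point $p$. By Proposition \ref{faith}, the induced representation $G_1 \to \mathrm{GL}(T_p\overline{X}) \simeq \mathrm{GL}_n(k)$ is faithful, so $G_1$ is isomorphic to a finite subgroup of $\mathrm{GL}_n(k)$.

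Next I would invoke Jordan's theorem for $\mathrm{GL}_n(k)$ (in characteristic zero, e.g.\ the classical bound, or just the form used elsewhere in this circle of ideas): there is a constant $j(n)$ such that $G_1$ contains a normal abelian subgroup $H$ of index at most $j(n)$. Since $k$ is algebraically closed of characteristic zero, this finite abelian subgroup of $\mathrm{GL}_n(k)$ is simultaneously diagonalizable, hence isomorphic to a subgroup of $(k^\times)^n$, and therefore generated by at most $n$ elements. Setting $M(n) = J(n)\cdot j(n)$, the subgroup $H \subseteq G_1 \subseteq G$ has index at most $M(n)$ in $G$, is abelian, and is generated by at most $n$ elements, as required.

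The main obstacle — really the only nontrivial input beyond bookkeeping — is Proposition \ref{fixpt}, i.e.\ producing a bounded-index subgroup with an honest fixed point; but that is already available to us (it rests on the boundedness of Fano varieties and the results of Prokhorov–Shramov and Birkar cited there). The remaining steps are standard: the regularization lemma, faithfulness of the tangent action, Jordan's theorem, and diagonalizability of finite abelian linear groups over an algebraically closed field. One should be slightly careful that the index bounds multiply correctly and that ``generated by at most $n$ elements'' survives passing to the diagonalizable subgroup, but no genuine difficulty arises there.
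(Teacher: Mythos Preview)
Your argument is essentially identical to the paper's: regularize via Proposition~\ref{reg}, apply Proposition~\ref{fixpt} to get a bounded-index subgroup with a fixed point, embed it into $\mathrm{GL}_n$ via Proposition~\ref{faith}, then apply Jordan's theorem and diagonalize the resulting abelian subgroup. The one slip is that you repeatedly assume $k$ is algebraically closed (both Proposition~\ref{fixpt} and the diagonalizability step require this), whereas the hypothesis only says $\mathrm{char}\,k = 0$; the paper fixes this by base-changing to $\overline{k}$ at the very first step, which you should do as well.
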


\begin{proof} We pass to the algebraic closure $\overline{k}$ of $k$ and replace $X$ by $X \times_{k}\overline{k}$. Let $\varphi \colon X' \dasharrow X$ be a smooth birational regularization of the action of $G$, which exists by Proposition \ref{reg}. Note that $X'$ is rationally connected as well. Therefore by Proposition \ref{fixpt} there exists a constant $J'(n)$ such that $G$ contains a subgroup $H$ of index at most $J'(n)$ acting on $X'$ with a fixed point. By Proposition \ref{faith} the group $H$ embeds into ~$\mathrm{GL}_n(\overline{k})$. Therefore by Jordan's theorem (see \cite{Jor78} or \cite{Rob90}) there exists a constant $J''(n)$ such that the group $H$ contains an abelian subgroup $A \subset H$ of index at most $J''(n)$. Then $A \subset G$ is a subgroup of index at most $$M(n) = J'(n)\cdot J''(n),$$ moreover, since $A$ is linear, it is generated by at most $n$ elements.
\end{proof}

Let us now show that any sequence of finite abelian subgroups in $\Bir(X)$ has asymptotic rank at most $n$ (see Corollary \ref{rankab2}). 

\begin{theorem}\label{MainRC} Let $X$ be a geometrically integral and geometrically rationally connected algebraic variety of dimension $n$ over an arbitrary field $k$ of zero characteristic. Suppose that there exists an unbounded sequence $\{N_i\}_{i \in \mathbb{N}}$ of positive integers such that the group $\Bir(X)$ contains a subgroup $G_i \simeq (\mathbb{Z}/N_i\mathbb{Z})^r$ for some fixed $r \in \mathbb{N}$. Then $r \leqslant n$.
\end{theorem}

\begin{proof} By Proposition 3, there exists a constant $M(n)$ such that, for each $i \in \mathbb{N}$,
there exists an abelian subgroup $H_i \subset G_i$ of index $\leqslant M(n)$. Hence the asymptotic
rank of $\{G_i\}$ is equal to that of $\{H_i\}$, which is at most $n$, because all finite abelian
groups $H_i$ are of rank $\leqslant n$. This proves the theorem.
\end{proof}

\subsection{The general case} Now we can prove Theorems \ref{Main} and \ref{MainKahler} from the Introduction.

\begin{proof}[Proof of Theorem 1.5] Passing to a resolution of singularities, we may assume that $X$ is smooth. If $X$ is not uniruled then the result follows from Theorem \ref{NonUni}. Suppose that $X$ is uniruled and consider its MRC fibration $f \colon X \dasharrow B$, where $\dim(B) < \dim(X)$. Then for every $i \in \mathbb{N}$ we have the exact sequence $$1 \to G'_i \to G_i \to G''_i \to 1,$$ where the action of $G'_i$ is fiberwise with respect to $f$ (that is, every element $g \in G'_i$ maps a point in a fiber of $f$ where $g$ is defined to a point in the same fiber) and $G''_i$ acts faithfully on the base $B$. Let $X_{\eta}$ be the scheme-theoretic generic fiber of $f$. Then for every $i \in \mathbb{N}$ we have $G'_i \subset \Bir(X_{\eta})$. We denote ~$n' = \dim(X_{\eta})$. Then by Theorem \ref{MainRC} the asymptotic rank of the sequence $\{G'_i\}$ is at most ~$n'$. Since $B$ is not uniruled by Proposition \ref{ghs}, we apply Theorem \ref{NonUni} and obtain that the asymptotic rank of the sequence $\{G''_i\}$ is at most $2\dim(B)$. Therefore by Lemma \ref{abelianmain}, the asymptotic rank $r$ of the sequence $\{G_i\}$ is at most $n' + 2\dim(B)$. In particular, $$r \leqslant 2\dim(B) + n' = \dim(X) + \dim(B) < 2n,$$ the result required.
\end{proof}

It is also possible to describe projective varieties such that $\Bir(X)$ contains a sequence of finite abelian groups of submaximal asymptotic rank.

\begin{corollary} Let $X$ be a projective variety over an algebraically closed field of zero characteristic. Suppose that there exists an unbounded sequence $\{N_i\}_{i \in \mathbb{N}}$ of positive integers such that the group $\Bir(X)$ contains subgroups $G_i$ isomorphic to $(\mathbb{Z}/N_i\mathbb{Z})^r$ for $r = 2\dim(X) - 1$. Then $X$ is birational either to \begin{itemize} \item an abelian variety $A$; \item the product $\PP^1 \times A$ where $A$ is an abelian variety of dimension $\dim(X)-1$. \end{itemize}
\end{corollary}

\begin{proof} For $\dim(X) = 1$ the result is obvious, since $X$ is then isomorphic to a rational or elliptic curve. Suppose from now on that $\dim(X) > 1$. If $X$ is not uniruled, then by Theorem \ref{NonUni} there exists a birational model $X'$ of $X$ and a faithful action of an abelian variety of dimension $$\left\lceil \frac{2\dim(X') - 1}{2} \right\rceil = \dim(X') = \dim(X)$$ on $X'$, so that by Theorem \ref{Main} $X$ is birational to an abelian variety. 

Suppose now that $X$ is uniruled and consider the MRC fibration $f \colon X \dasharrow B$. Since $\dim(X) > 1$, we have $2\dim(X) - 1 > \dim(X)$ and therefore by Theorem \ref{MainRC} $X$ cannot be rationally connected, that is, $\dim(B) > 0$. Let $X_{\eta}$ be the general fiber of $f$. Let also $\{G'_i\}$ be the sequence of subgroups acting fiberwise with respect to $f$, and denote by $\{G''_i\}$ the sequence of quotient groups. By Theorem \ref{MainRC} the asymptotic rank of $\{G'_i\}$ does not exceed $\dim(X_{\eta}) \leqslant \dim(X)$. Therefore the asymptotic rank of $\{G_i''\}$ is at least $$2\dim(X) - 1 - \dim(X_{\eta}) \geqslant 2\dim(B) > 0.$$ Now by Theorem \ref{NonUni} the non-uniruled variety $B$ is birational to an abelian variety $A$; moreover, $A$ has maximal possible dimension, equal to $\dim(X) - 1$. Since the asymptotic rank of $\{G_i'\}$ is equal to 1, it follows by \cite[Theorem 4.14]{BZ17} that $X_\eta \simeq \PP^1_{k(B)}$, and so $X$ is birational to a product $\mathbb{P}^1\times A$.
\end{proof}

Before proceeding with the proof of Theorem \ref{MainKahler}, we need the following technical lemma (see \cite[Lemma 3.1]{PS21} for the proof). Recall that a {\em very typical} fiber of a dominant meromorphic map $\alpha \colon X \dasharrow Y$ is a fiber $X_t = \alpha^{-1}(t)$ over a point $t \in Y$ in the complement to at most countable union of proper analytic subspaces of $Y$. By $\Bim(X)_{\alpha}$ we denote the subgroup of elements of $\Bim(X)$ acting fiberwise with respect to $\alpha$.

\begin{lemma}\label{typfiber} Let $\alpha \colon X \dasharrow Y$ be a dominant meromorphic map of compact complex manifolds. Then there exist a constant $I = I(\alpha)$ with the following property. Let $\{G_i\}_{i \in \mathbb{N}}$ be a sequence of finite subgroups of $\Bim(X)_{\alpha}$. Then there exists a reduced fiber $F$ of $\alpha$ and its irreducible component $F'$ of dimension $\dim(X) - \dim(Y)$ such that for every $i \in \mathbb{N}$ the group $G_i$ contains a subgroup of index at most $I$, which is isomorphic to a subgroup of $\Bim(F')$. Moreover, if $\dim(Y) > 0$ the fiber $F$ can be chosen to be very typical. 
\end{lemma}

Now we can apply the same line of reasoning to the case of compact K\"ahler spaces, applying Lemma \ref{typfiber} to the MRC fibration of $X$. 

\begin{proof}[Proof of Theorem 1.6] Passing to a resolution of singularities, we may assume that $X$ is smooth. If $X$ is not uniruled then the result follows from Proposition \ref{NonUniKah}. 

Suppose that $X$ is uniruled. Then we consider the MRC fibration $f \colon X \dasharrow B$ with $B$ non-uniruled and $\dim(B) < \dim(X)$. If $\dim(B) = 0$ then $X$ is rationally connected and hence projective by Proposition \ref{RatKah}; this case follows from Theorem \ref{MainRC}. Assume from now on that $\dim(B) > 0$. Then for every $i \in \mathbb{N}$ there exists an exact sequence of groups $$1 \to G'_i \to G_i \to G''_i \to 1,$$ where the action of $G'_i$ is fiberwise with respect to $f$ and $G''_i$ acts faithfully on $B$. Since the set of finite groups $\{G_i\}_{i \in \mathbb{N}}$ is countable, by Lemma \ref{typfiber} we may assume that $$G'_i \subset \Bim(X_t),$$ where $X_t$ is a very typical (in particular, smooth) fiber of $f$. Note that by Proposition \ref{RatKah} smooth fibers of $f$ are projective. Now by Theorem \ref{MainRC}, the asymptotic rank of the sequence $\{G'_i\}$ is at most $\dim(X_t)$. Moreover, by the assumptions on $B$ and by Theorem ~\ref{NonUni} the asymptotic rank of the sequence $\{G''_i\}$ is at most $2\dim(B)$. By Lemma ~\ref{abelianmain}, the asymptotic rank $r$ of the sequence $\{G_i\}$ is at most $2\dim(B) + \dim(X_t)$; in particular, $$r \leqslant \dim(X_t) + 2\dim(B) < 2\dim(X),$$ as desired.
\end{proof}

\begin{remark} To prove Theorem \ref{MainKahler} in full generality, it suffices to prove that ``large'' finite abelian subgroups of $\Bim(X)$ can be pseudo-regularized on a compact K\"ahler manifold $X'$ bimeromorphic to $X$. By considering the algebraic reduction (see \cite[Definition 3.3]{Ue75}) of a compact K\"ahler space $X$, it suffices to resolve the above problem in the case when $X$ has algebraic dimension 0. In particular, if $X$ has no divisors (like a general compact complex torus) this statement is clear, since $\Bim(X) = \Psaut(X)$ in this case.
\end{remark}

\medskip

\flushleft{\address{Steklov Mathematical Institute of Russian Academy of Sciences, Moscow, Russia \\
8 Gubkina St., Moscow 119991, Russia \\}}
\email{golota.g.a.s@mail.ru, agolota@hse.ru}

\end{document}